\documentclass[a4paper]{article}
\usepackage{amsmath,amsthm,amssymb,amscd,ascmac, amsfonts}
\usepackage{mathrsfs}
\usepackage{stmaryrd}
\usepackage{braket}
\usepackage{color}
\usepackage{accents}
\usepackage{url}
\usepackage[english]{babel}
\usepackage[margin=20truemm]{geometry}
\usepackage[title,titletoc]{appendix}
\usepackage[pdftex]{graphicx}
\allowdisplaybreaks[4]

\newcommand{\Zz}{\mathbb{Z}}
\newcommand{\Cz}{\mathbb{C}}
\newcommand{\Rz}{\mathbb{R}}

\newcommand{\hyper}[4]{\left( \begin{matrix} #1 \\ #2 \end{matrix};#3;#4\right)}

\numberwithin{equation}{section}
\theoremstyle{plain}
\newtheorem{thm}{Theorem}
\newtheorem{defi}{Definition}
\newtheorem{prop}{Proposition}
\newtheorem{lem}{Lemma}
\newtheorem{coro}{Corollary}

\newtheorem{rmk}{Remark}

\newtheorem{exg}{Example}

\begin{document}
\title{A degeneration of the generalized Zwegers' $\mu$-function according to the Ramanujan difference equation}
\author{G. Shibukawa and S. Tsuchimi\\
(Kitami Institute of Technology and Kindai University, Japan)}
\date{}

\maketitle

\begin{abstract}
In this paper, we introduce the little $\mu$-function, which is obtained as a degenerate limit of the generalized $\mu$-function. 
We derive the little $\mu$-function as the image of the $q$-Borel summation of a divergent solution to the Ramanujan equation one of the most degenerate second order linear $q$-difference equations of Laplace type excluding those of constant coefficients. 
Moreover, we present several formulas, such as symmetries and connection formulas for the little $\mu$-function, similar to those for the generalized $\mu$-function. 
Furthermore, we establish contiguous relations related to the $q,t$-Fibonacci sequences and evaluate Wronski determinants involving the quadratic relation of the Rogers-Ramanujan continued fraction. 
\end{abstract}

\section{Introduction}

Throughout this paper, let $i:=\sqrt{-1}$ be the imaginary unit, $\tau\in\Cz$ be a complex number with ${\rm Im}(\tau)>0$, $q:=e^{2\pi i\tau}$ and $a:=q^\alpha\ (\alpha\in\Cz)$. 
We define the $q$-shifted factorials and the Jacobi theta function as follows: 
\begin{align*}
(x)_\infty
  &=
  (x;q)_\infty
  :=
  \prod_{n=0}^\infty(1-x q^n), \quad  
  (x)_\alpha
  =
  (x;q)_\alpha
  :=
  \frac{(x)_\infty}{(a x)_\infty}\quad (\alpha\in\Cz), 
  \\
  (a_1,\ldots,a_r)_\beta
  &=
  (a_1,\ldots,a_r;q)_\beta
  :=
  \prod_{j=1}^r(a_j)_\beta
  \quad (\beta\in\Cz\cup\{\infty\}), 
  \\
\theta(x;q)&=\theta(x):=(x,q/x)_\infty=\frac{1}{(q)_\infty}\sum_{n\in\Zz}(-x)^nq^\frac{n(n-1)}{2}. 
\end{align*}
For appropriate complex numbers $a_1,\ldots,a_r,b_1,\ldots,b_s,x$, we denote the $q$-hypergeometric series as follows: 
\begin{align*}
{}_r\phi_s\hyper{a_1,\ldots,a_r}{b_1,\ldots,b_s}{q}{x}
  &:=
  \sum_{n=0}^\infty
  \frac{(a_1,\ldots,a_r)_n}{(b_1,\ldots,b_s,q)_n}
    \left((-1)^nq^\frac{n(n-1)}{2}\right)^{s-r+1} x^n
    , \\
{}_r\psi_s\hyper{a_1,\ldots,a_r}{b_1,\ldots,b_s}{q}{x}
  &:=
  \sum_{n\in\Zz}
  \frac{(a_1,\ldots,a_r)_n}{(b_1,\ldots,b_s)_n}
    \left((-1)^nq^\frac{n(n-1)}{2}\right)^{s-r} x^n. 
\end{align*}

The authors introduced the generalized $\mu$-function as follows \cite{ST1}:
\begin{align}
\label{eq: defi of gmu}
\widehat{\mu}(x,y;a)
   &:=
   iq^{-\frac{1}{8}}
   \frac{(xy)^{\frac{\alpha}{2} }(ax)_{\infty }}{(x,q)_{\infty }\theta (y)}
   {}_1\psi_2\left(\begin{matrix} x \\ 0,ax \end{matrix};q,\frac{q}{y}\right)\quad (x,y\not\in q^{\Zz}:=\{q^n\in\Cz; n\in\Zz\}),
\end{align}
but, here we use the definition given in \cite{ST2}. 
The original Zwegers $\mu$-function \cite{Zw1} is a special case of our generalized $\mu$-function : 
\begin{align*}
\widehat{\mu}(x,y;q)=\mu(x,y):=iq^{-\frac{1}{8}}\frac{\sqrt{xy}}{(q)_\infty\theta(y)}\sum_{n\in\Zz}\frac{(-1)^ny^nq^\frac{n(n+1)}{2}}{1-x q^n}, 
\end{align*}
which includes a family of $q$-series called Ramanujan's mock theta functions and satisfies many formulas such as $q$-difference relations and mock modular properties \cite{BFOR}. 

In recent years, there have been several studies of these $\mu$-functions from the viewpoint of $q$-Borel summable methods (for example, see \cite{GW}, \cite{ST1} and \cite{ST2}). 
$q$-Borel summable methods are one of the techniques for constructing convergent solutions from divergent solutions of $q$-difference equations. 
In this paper, for the formal series $g(x)=\sum_{n=0}^\infty a_n x^n$, we define the $q$-Borel transformation $\mathcal{B}_q$ and the $q$-Laplace transformation $\mathcal{L}_q$ as follows: 
\begin{align}
\label{eq: q-Borel Laplace transformation}
\mathcal{B}_q(g)(\xi):=\sum_{n=0}^\infty a_nq^\frac{n(n-1)}{2}\xi^n,\quad \mathcal{L}_q(g)(x;\lambda):=\sum_{n\in\Zz}\frac{g(\lambda q^n)}{\theta_q(\lambda q^n/x)}=\frac{1}{1-q}\int_0^{\lambda \infty}\frac{g(t)}{\theta_q(t/x)}\ d_qt, 
\end{align}
where the integral in the last term of the $q$-Laplace transformation is called the Jackson integral and is defined as the following sum: 
\begin{align*}
\int_0^{\lambda\infty} f(t)\ d_qt:=(1-q)\sum_{n\in\Zz}^\infty f(\lambda q^n). 
\end{align*}
Note that the parameter $\lambda$ appearing in the $q$-Laplace transformation is regarded as the argument of the integral path of the Jackson integral. 

For $\varphi_n(x):=x^n\ (n\in\Zz)$, a simple calculation shows that
\begin{align}
\label{eq: mono summation}
\mathcal{L}_q\circ\mathcal{B}_q(\varphi_n)(x,\lambda)
  &=x^n. 
\end{align}
Namely, from \eqref{eq: mono summation}, if we apply the composite of the $q$-Borel transformation and the $q$-Laplace transformation $\mathcal{L}_q\circ\mathcal{B}_q$ to a convergent series, the image is equal to the original convergent series. 
If we obtain a convergent series when we apply $\mathcal{L}_q\circ\mathcal{B}_q$ to a divergent solution $g(x)$ of a $q$-difference equation, the function $\mathcal{L}_q\circ\mathcal{B}_q(g)(x,\lambda)$ is a convergent solution of the original $q$-difference equation (see \cite{RSZ}). 

As an example of a $q$-difference equation in which divergent solutions arise, we present the following equation that is called the $q$-Hermite-Weber equation: 
\begin{align}
\label{eq: q-HW equation}
\left[T_x^2-(1-a x)T_x-x\right]f(x)=0,\quad T_xf(x):=f(q x). 
\end{align}
A convergent solution of the $q$-Hermite-Weber equation \eqref{eq: q-HW equation} around $x=0$ is given by 
\begin{align*}
\widetilde{g}_0(x;a):=\frac{1}{\theta(x)}{}_1\phi_1\hyper{q/a}{0}{q}{ax}=\frac{(ax)_\infty}{\theta(x)}{}_0\phi_1\hyper{-}{ax}{q}{xq}, 
\end{align*}
and a divergent solution is
\begin{align*}
\widetilde{g}_1(x;a):=\sum_{n=0}^\infty \frac{(a)_n}{(q)_n}q^{-\frac{n(n+1)}{2}}(-x)^n={}_2\phi_0\hyper{a,0}{-}{q}{\frac{x}{q}}. 
\end{align*}
The relation between the generalized $\mu$-function $\widehat{\mu}$ and the divergent solution $\widetilde{g}_1(x;a)$ is 
\begin{align*}
\widehat{\mu}(x,y;a;q)=i(xy)^\frac{\alpha}{2}q^{-\frac{1}{8}}\mathcal{L}_q\circ\mathcal{B}_q\left(\widetilde{g}_1\right)(xy,-x). 
\end{align*}

Furthermore, we gave some formulas of the generalized $\mu$-function such as the $q$-difference equation, symmetries, expression of ${}_0\psi_2$ and the very-well-poised bilateral $q$-hypergeometric series, corresponding to the convergent solution and the connection formulas \cite[Theorem 1.3]{ST1}, \cite[$(1.37)$--$(1.42)$]{ST2}: 
\begin{align}
\label{eq: gmu formula 1}
\widehat{\mu}(x q^2,y;a)
  &=
  \sqrt{a}(1-axy)\widehat{\mu}(x q,y;a)+a x y \widehat{\mu}(x,y;a), \\
\label{eq: gmu formula 2}
\widehat{\mu}(x,y;a)
  &=
  \widehat{\mu}(x/q,yq;a)
    =
    \widehat{\mu}(y,x;a), \\
\label{eq: gmu formula 3}
\widehat{\mu}(x,y;a)
  &=
  iq^{-\frac{1}{8}}\frac{(xy)^\frac{\alpha}{2}(a,ax)_\infty}{(y,q)_\infty\theta(x)}{}_0\psi_2\hyper{-}{q/y,ax}{q}{\frac{xq}{y}}, \\
\label{eq: gmu formula 4}
\widehat{\mu}(x,y;a)
  &=
  iq^{-\frac{1}{8}}\frac{(xy)^\frac{\alpha}{2}(ax,y/a)_\infty}{(x,y,q)_\infty\theta(x/y)}\sum_{n\in\Zz}\left(1-\frac{x}{y}q^{2n}\right)\frac{(x,aq/y)_n}{(x/a,q/y)_n}q^{n(2n-1)}\left(\frac{x^2}{ay^2}\right)^n, \\
\label{eq: gmu formula 5}
\lim_{y\to1}\theta(y)\widehat{\mu}(x,y;a)
  &=
  iq^{-\frac{1}{8}}\frac{x^\frac{\alpha}{2}(a,ax)_\infty}{\theta(x)}{}_0\phi_1\hyper{-}{ax}{q}{xq}, \\
\widehat{\mu}(x,y;a)
  &=
  \frac{\theta(x/c_1)\theta(y c_1)\theta(c_2)\theta(c_2y/x)}{\theta(x)\theta(y)\theta(c_1c_2y/x\theta(c_2/c_1))}\widehat{\mu}(x/c_1,yc_1;a) \nonumber \\
\label{eq: gmu formula 6}
    &\qquad +\frac{\theta(x/c_2)\theta(y c_2)\theta(c_1)\theta(c_1y/x)}{\theta(x)\theta(y)\theta(c_1c_2y/x)\theta(c_1/c_2)}\widehat{\mu}(x/c_2,yc_2;a), \\
\label{eq: gmu formula 7}
\widehat{\mu}(x,y;a)
  &=
  \widehat{\mu}(x/c,yc;a)
    +\frac{iq^{-\frac{1}{8}}(xy)^\frac{\alpha}{2}\theta(c)\theta(cy/x)}{\theta(x)\theta(y)\theta(c/x)\theta(cy)}(a,axy)_\infty{}_0\phi_1\hyper{}{axy}{q}{xq}. 
\end{align}

The $q$-Hermite-Weber equation \eqref{eq: q-HW equation} is an example of second order $q$-difference equations of Laplace type
\begin{align}
\label{eq: Laplace}
[(a_0+b_0x)T_x^2+(a_1+b_1x)T_x+(a_2+b_2x)]f(x)=0, 
\end{align}
and is given by taking a few degenerate limits of the following $q$-difference equation satisfied by the Heine's $q$-hypergeometric series ${}_2\phi_1$ which is a master class of Laplace type (see \cite[Figure 2]{O}): 
\begin{align}
\label{eq: Heine eq}
[(c-abqx)T_x^2-(c+q-(a+b)qx)T_x+q(1-x)]f(x)=0. 
\end{align}
The fundamental solutions around $x=0$ of \eqref{eq: Heine eq} are as follows:  
\begin{align}
\label{eq: Heine sol}
\frac{\theta(x)}{\theta(xq/c)}{}_2\phi_1\hyper{aq/c,bq/c}{q^2/c}{q}{x}=\frac{(q/x,abx/c)_\infty}{\theta(x q/c)}{}_2\phi_1\hyper{q/a,q/b}{q^2/c}{q}{\frac{abx}{c}},\quad {}_2\phi_1\hyper{a,b}{c}{q}{x}, 
\end{align}
and formal solutions of the $q$-Hermite-Weber equation \eqref{eq: q-HW equation} are also obtained by the following degenerate limits of \eqref{eq: Heine sol}: 
\begin{align}
\label{eq: convergent solution of q-HW}
\widetilde{g}_0(x;a)
  &=
  \lim_{\substack{b\to0, \\c\to0}}\frac{(cq^2/x,abx/q)_\infty}{\theta(x)}{}_2\phi_1\hyper{q/a,q/b}{c q^2}{q}{\frac{abx}{q}}
  , \\
\label{eq: divergent solution of q-HW}
 \widetilde{g}_1(x;a)
  &=
  \lim_{\substack{b\to0, \\c\to0}}{}_2\phi_1\hyper{a,b}{1/c}{q}{\frac{x}{cq}}. 
\end{align}

By substituting an appropriate scaling transformation and specialization $a=0$ of the $q$-Hermite-Weber equation \eqref{eq: q-HW equation}, we derive the following $q$-difference equation: 
\begin{align}
\label{eq: Ramanujan equation}
\left[T_{x}^{2} - T_{x} - qxy \right]f(x)=0. 
\end{align}
The above equation \eqref{eq: Ramanujan equation} is one of the most degenerate examples of Laplace type linear $q$-difference equations excluding those of constant coefficients. 
Involving \eqref{eq: Ramanujan equation}, there are six such most degenerate equations from \eqref{eq: Laplace} and they are transformed into one another by suitable gauge transformations (see Appendix \ref{app: A}). 
Hence, in this paper, we only focus on \eqref{eq: Ramanujan equation}, and we call the ``Ramanujan equation''. 

A convergent solution and a divergent solution of the Ramanujan equation \eqref{eq: Ramanujan equation} around $x=0$ are derived from the following degenerate limits of \eqref{eq: convergent solution of q-HW} and \eqref{eq: divergent solution of q-HW}: 
\begin{align*}
\widetilde{f}_0(xy)&:=\frac{1}{\theta(xy q)}{}_0\phi_{1}\left(\begin{matrix} - \\ 0 \end{matrix};q,xy q^2\right)=\lim_{a\to0}\widetilde{g}_0(xyq;a), \\
\widetilde{f}_1(x)&:={}_2\phi_0\hyper{0,0}{-}{q}{xy}=\lim_{a\to0}\widetilde{g}_1(xyq;a). 
\end{align*}
We remark that $\theta(xq)\widetilde{f}_0(x/q)$ is sometimes called the Ramanujan entire function. 
The special cases of $\theta(xq)\widetilde{f}_0(x/q)$ coincide with the series of the Rogers-Ramanujan identities:
\begin{align*}
G(q)=\theta(q)\widetilde{f}_0(1/q),\quad H(q)=\theta\left(q^2\right)\widetilde{f}_0(1), 
\end{align*} 
where
\begin{align*}
G(q)&:=\sum_{n=0}^\infty\frac{q^{n^2}}{(q;q)_n}=\frac{1}{\left(q,q^4;q^5\right)_\infty}, \quad H(q):=\sum_{n=0}^\infty\frac{q^{n^2+n}}{(q;q)_n}=\frac{1}{\left(q^2,q^3;q^5\right)_\infty}. 
\end{align*}

Also, for $n\in\Zz$, the convergent solution $\widetilde{f}_0$ is expressed by a linear combination of the $q,t$-Fibonacci sequences: 
\begin{align}
\widetilde{f}_0\left(xq^{n-1}\right)&=\widetilde{f}_0(xq^{-1})T_{n-1}(x,q)+\widetilde{f}_0(x)S_{n}(x,q)
, \\
\label{eq: Garret, Ismail formula}
\widetilde{f}_0(q^{n-1})&=\frac{G(q)}{\theta(-q)}T_{n-1}(q)+\frac{H(q)}{\theta(-q^2)}S_n(q). 
\end{align}
The formula equals to \cite[(3.2)]{GIS}. 
The sequences $S_n(t,q)$ and $T_{n-1}(t,q)$ are the unique solutions that satisfy the following recursion
\begin{align}
\label{eq: q-Fibonacci}
F_n(t,q)=F_{n-1}(t,q)+tq^{n-2}F_{n-2}(t,q)
\end{align}
with the initial values 
\begin{align*}
\begin{cases}
F_0(t,q)=S_0(t,q)=0\\
F_1(t,q)=S_1(t,q)=1
\end{cases}, \quad 
\begin{cases}
F_0(t,q)=T_{-1}(t,q)=1\\
F_1(t,q)=T_0(t,q)=0
\end{cases}, 
\end{align*}
respectively \cite{A}, \cite{C}. 
Also, when $t=1$, the sequences
\begin{align}
\label{eq: Schur Fibo}
S_n(q):=S_n(1,q),\quad T_n(q):=T_n(1,q)
\end{align}
are the Schur's $q$-Fibonacci sequences \cite{S}.
Note that the recursion of the $q,t$-Fibonacci sequences \eqref{eq: q-Fibonacci} is obtained by specializing the Ramanujan equation \eqref{eq: Ramanujan equation} with 
\begin{align}
\label{eq: corresponding between q-Fibonacci and Raman ujan eq}
x\mapsto tq^{n-1}/y,\quad f\left(tq^{n-1}/y,y\right)=F_n(t,q). 
\end{align} 

On the other hand, from the $q$-Borel summable method of the divergent solution $\widetilde{f}_1$, we obtain the following function which we call the little $\mu$-function. 
\begin{defi}
For $x, y\not\in q^{\mathbb{Z}} $, we define the following series: 
\begin{align*}
l\widehat{\mu}(x,y)
   &:=
   iq^{-\frac{1}{8}}\frac{1}{(x,q)_{\infty }\theta(qy)}
   {}_1\psi_2\left(\begin{matrix} x \\ 0,0 \end{matrix};q,\frac{1}{y}\right).
\end{align*}
\end{defi}

In this paper, first, we show that the little $\mu$-function coincides with the image of $\mathcal{L}_q\circ\mathcal{B}_q$ of the divergent solution $\widetilde{f}_1$, and is obtained by a degenerate limit of the generalized $\mu$-function. 
\begin{thm}
\label{thm: lmu and gmu}
We have the following equations: 
\begin{align}
\label{eq: lmu and divergent series}
  l\widehat{\mu}(x,y)
   &=iq^{-\frac{1}{8}}\mathcal{L}_q\circ\mathcal{B}_q\left(\widetilde{f}_1\right)(x,-x/y)
, \\
\label{eq: lmu and gmu}
l\widehat{\mu}(x,y)
      &=\lim_{a \to 0}
      (xy)^{-\frac{\alpha }{2}}a^{-\frac{1}{2}}
      \widehat{\mu}(q x,y;a)
      =\lim_{a \to 0}
      (xy)^{-\frac{\alpha }{2}}a^{-\frac{1}{2}}
      \widehat{\mu}(x,qy;a). 
\end{align}
\end{thm}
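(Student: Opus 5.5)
The plan is to prove \eqref{eq: lmu and divergent series} by computing $\mathcal{B}_q$ and then $\mathcal{L}_q$ explicitly, and to prove \eqref{eq: lmu and gmu} by a termwise limit in the defining series \eqref{eq: defi of gmu}, using \eqref{eq: gmu formula 2} for the second equality.

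\emph{Step 1: the $q$-Borel transform.} Read $\widetilde{f}_1$ as a power series in $x$ with $y$ as a parameter. By the definition of ${}_r\phi_s$ with $r=2$, $s=0$, its coefficients are
\begin{align*}
a_n=\frac{(-1)^n q^{-\frac{n(n-1)}{2}}y^n}{(q)_n}.
\end{align*}
Multiplying by $q^{\frac{n(n-1)}{2}}$ cancels the Gaussian factor. Euler's identity then gives
\begin{align*}
\mathcal{B}_q(\widetilde{f}_1)(\xi)=\sum_{n\ge0}\frac{(-y\xi)^n}{(q)_n}=\frac{1}{(-y\xi)_\infty}.
\end{align*}

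\emph{Step 2: the $q$-Laplace transform with $\lambda=-x/y$.} Substituting into \eqref{eq: q-Borel Laplace transformation} gives
\begin{align*}
\sum_{n\in\Zz}\frac{1}{(xq^n)_\infty\,\theta(-q^n/y)}.
\end{align*}
I will then use $(xq^n)_\infty=(x)_\infty/(x)_n$, valid for all $n\in\Zz$, together with the quasi-periodicity
\begin{align*}
\theta(zq^n)=(-z)^{-n}q^{-\frac{n(n-1)}{2}}\theta(z).
\end{align*}
These turn the sum into
\begin{align*}
\frac{1}{(x)_\infty\theta(-1/y)}\sum_{n}(x)_n\, q^{\frac{n(n-1)}{2}}(\text{sign})^n y^{-n}.
\end{align*}
This should be matched with ${}_1\psi_2(x;0,0;q,1/y)$, whose $n$-th term is $(x)_n(-1)^nq^{\frac{n(n-1)}{2}}y^{-n}$. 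The prefactor $\theta(-1/y)$ must likewise be rewritten via $\theta(qy)=\theta(1/y)$ and $\theta(1/y)=-y^{-1}\theta(y)$.

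The only delicate point here is the sign bookkeeping, i.e.\ matching $\theta(-1/y)$ and $(-1)^n$ with the normalization $\theta(qy)$. I expect this to work out through $\theta(-z)$ versus $\theta(z)$ conventions, possibly by replacing $y\mapsto -y$ inside $\widetilde f_1$ as the argument is read. I will check it carefully on the $n=0$ term and on the quasi-periodicity.

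\emph{Step 3: the limit $a\to0$.} Write out $\widehat{\mu}(qx,y;a)$ from \eqref{eq: defi of gmu}. The power prefactor satisfies
\begin{align*}
(qxy)^{\frac{\alpha}{2}}(xy)^{-\frac{\alpha}{2}}=q^{\frac{\alpha}{2}}=a^{\frac12},
\end{align*}
which cancels $a^{-\frac12}$ exactly. The remaining factors are $(aqx)_\infty\to1$ and $(aqx)_n\to1$, so formally the limit is
\begin{align*}
iq^{-\frac18}\frac{1}{(qx,q)_\infty\theta(y)}\,{}_1\psi_2\hyper{qx}{0,0}{q}{\frac{q}{y}}.
\end{align*}
I then rewrite $(qx)_n=(x)_{n+1}/(1-x)$ and $(qx)_\infty=(x)_\infty/(1-x)$, so the factors $(1-x)$ cancel. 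Shifting the summation index $n\mapsto n-1$ converts $q^{\frac{n(n-1)}{2}}(q/y)^n$ into $q^{\frac{(n-1)(n-2)}{2}+n-1}y^{1-n}$, which has the form $q^{\frac{n(n-1)}{2}}y^{-n}$ up to the factor $-y$. That factor is absorbed by $\theta(y)=-y\,\theta(qy)$, giving exactly $l\widehat{\mu}(x,y)$.

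\emph{Step 4: justifying the interchange, and the second equality.} The interchange of limit and sum needs a uniform bound for small $|a|$. For $n\ge0$ the factor $q^{\frac{n(n-1)}{2}}$ dominates, and $1/(aqx)_n$ is bounded uniformly for $|a|$ small when $x\notin q^{\Zz}$. For $n=-m<0$, the term $1/(aqx)_{-m}=(aqxq^{-m})_m$ grows at most like $q^{-\frac{m(m+1)}{2}}$ uniformly, while $(qx)_{-m}$ decays like $q^{\frac{m(m+1)}{2}}$. Together with $q^{\frac{m(m+1)}{2}}$ from the Gaussian factor, this leaves a net Gaussian decay, so dominated convergence applies. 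I expect this uniform estimate on the negative-index tail to be the main technical obstacle, though a routine one.

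Finally, \eqref{eq: gmu formula 2} applied at $(qx,y)$ gives $\widehat{\mu}(qx,y;a)=\widehat{\mu}(x,qy;a)$ for every $a$. The prefactor $(xy)^{-\frac{\alpha}{2}}a^{-\frac12}$ is identical on both sides, so the second limit equals the first.
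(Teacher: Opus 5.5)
Your Steps 1, 3 and 4 are sound. The Borel transform $1/(-y\xi)_\infty$ is right, and the limit at $(qx,y)$ is correct: you shift $n\mapsto n-1$ and then use $\theta(y)=-y\,\theta(qy)$. The paper takes a slightly shorter route. It computes at $(x,qy)$, where $\widehat{\mu}(x,qy;a)$ already contains $\theta(qy)$ and ${}_1\psi_2(x;0,ax;q,1/y)$, so the limit is immediate, and then it transfers the result via \eqref{eq: gmu formula 2}. Your dominated-convergence estimate supplies a justification the paper does not give.

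\textbf{The gap is in Step 2, and it is more than sign bookkeeping.} You read $\theta_q$ in \eqref{eq: q-Borel Laplace transformation} as the paper's $\theta$. Under that reading, the quasi-periodicity you quote gives $1/\theta(-q^n/y)=y^{-n}q^{n(n-1)/2}/\theta(-1/y)$, with no $(-1)^n$ at all. Hence $iq^{-1/8}$ times your sum equals
$$iq^{-1/8}(x)_\infty^{-1}\theta(-1/y)^{-1}\,{}_1\psi_2(x;0,0;q,-1/y)=(q)_\infty\,l\widehat{\mu}(x,-y),$$
using $\theta(-1/y)=\theta(-qy)$. So besides the sign of $y$, a factor $(q)_\infty$ is missing, which you did not notice. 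Relabelling $y\mapsto -y$ inside $\widetilde{f}_1$ is not available, since $\widetilde{f}_1$ is fixed by its definition.

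The missing idea is the normalization
$$\theta_q(z)=\sum_{n\in\Zz}q^{n(n-1)/2}z^n=(q)_\infty\theta(-z).$$
This is forced by \eqref{eq: mono summation}: with $\theta_q=\theta$, its left-hand side would be $(q)_\infty(-x)^n$ rather than $x^n$. It is also what the paper actually uses, writing the Laplace denominator as $(q)_\infty\theta(-\lambda q^n/x)$. With it you get
$$\theta_q(-q^n/y)=(q)_\infty\theta(q^n/y)=(q)_\infty(-y)^nq^{-n(n-1)/2}\theta(1/y),\qquad \theta(1/y)=\theta(qy).$$
These produce exactly the $(q)_\infty$, the $(-1)^n$ and the $\theta(qy)$ in $l\widehat{\mu}(x,y)$. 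After that, $(xq^n)_\infty=(x)_\infty/(x)_n$ finishes the computation as you planned.
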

Moreover, we present a $q$-difference equation, symmetries, expressions by ${}_0\psi_2$ and very-well-poised bilateral $q$-hypergeometric series, corresponding to the convergent solution, connection formulas and contiguous relations of the little $\mu$-function. 
\begin{thm}
\label{thm: lmu property}
We obtain
\begin{align}
\label{eq: lmu formula 1}
l\widehat{\mu}(x,y)
   &=
   l\widehat{\mu}(q x,y)-x y l\widehat{\mu}(x/q,y), \\
\label{eq: lmu formula 2}
l\widehat{\mu}(x,y)
   &=
   l\widehat{\mu}(x/q,q y)
   =
   l\widehat{\mu}(y,x), \\
\label{eq: lmu formula 3}
l\widehat{\mu}(x,y)
  &=
  \frac{iq^{-\frac{1}{8}}}{(qy,q)_\infty\theta(x)}{}_0\psi_2\hyper{-}{1/y,0}{q}{\frac{x}{y}}, \\
\label{eq: lmu formula 3.5}
l\widehat{\mu}(x,y)
  &=
  \frac{iq^{-\frac{1}{8}}}{(x,y q)_\infty\theta(x/y q)}\sum_{n\in\Zz}\left(1-\frac{x}{y}q^{2n-1}\right)\frac{(x)_n}{(1/y)_n}q^{\frac{5}{2}n(n-1)}\left(-\frac{x^2}{y^3 q}\right)^n, \\
\label{eq: lmu formula 4}
\lim_{y\to 1}\theta(y)l\widehat{\mu}(x,y)
  & =
  \frac{iq^{-\frac{1}{8}}}{\theta(qx)}{}_0\phi_1\hyper{-}{0}{q}{x q^2}, \\
\label{eq: lmu formula 5}
l\widehat{\mu}(x,y)
   &=
   \frac{\theta(x/c_1)\theta(c_1 y)\theta(c_2y/x)\theta(c_2)}{\theta(x)\theta(y)\theta(c_1c_2y/x)\theta(c_2/c_1)}
      l\widehat{\mu}(x/{c}_{1},y{c}_{1})
   +
   \frac{\theta(x/{c}_{2})\theta({c}_{2}y)\theta ({c}_{1}y/x)\theta ({c}_{1})}{\theta (x)\theta (y)\theta ({c}_{2}{c}_{1}y/x)\theta ({c}_{1}/{c}_{2})}
   l\widehat{\mu}(x/{c}_{2},y{c}_{2})\\
\label{eq: lmu formula 6}
   &=
   l\widehat{\mu}(x/c,y c)
     -\frac{iq^{-\frac{1}{8}}\theta(c)\theta (y c/x)}{\theta (x q)\theta (y q)\theta (c/x)\theta (y c)} 
       {}_0\phi_{1}\left(\begin{matrix} - \\ 0 \end{matrix};q,x y q^2\right).
\end{align}
For $n\in\Zz$, we have
\begin{align}
\label{eq: lmu formula 7}
l\widehat{\mu}(xq^{n-1},y)&=l\widehat{\mu}(x q^{-1},y)T_{n-1}(xy,q)+l\widehat{\mu}(x,y)S_{n}(xy,q). 
\end{align}
\end{thm}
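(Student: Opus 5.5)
The natural route is to obtain every formula of Theorem \ref{thm: lmu property} from the corresponding formula \eqref{eq: gmu formula 1}--\eqref{eq: gmu formula 7} for the generalized $\mu$-function. We pass to the limit $a\to0$ using \eqref{eq: lmu and gmu} of Theorem \ref{thm: lmu and gmu}, writing $l\widehat{\mu}(x,y)=\lim_{a\to0}(xy)^{-\alpha/2}a^{-1/2}\widehat{\mu}(qx,y;a)$. This identity applies equally at shifted arguments $(xq^k,y)$, with the prefactor changing by $q^{k\alpha/2}=a^{k/2}$.

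\textbf{The $q$-difference equation \eqref{eq: lmu formula 1}.} Apply \eqref{eq: gmu formula 1} at $x\mapsto x/q$ and normalize by $(xy)^{-\alpha/2}a^{-1/2}$. The term $\widehat{\mu}(xq,y;a)$ carries an extra $a^{1/2}$ relative to the normalization at $x$, and this cancels against the $\sqrt a$ in \eqref{eq: gmu formula 1}. The term $axy\,\widehat{\mu}(x/q,y;a)$ carries $a^{-1/2}$, which leaves $a^{1/2}\cdot a^{1/2}$ and hence exactly the coefficient $xy$ after relabeling. The sign and the precise shift should be checked carefully.

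\textbf{Symmetries and series expressions \eqref{eq: lmu formula 2}, \eqref{eq: lmu formula 3}, \eqref{eq: lmu formula 3.5}.} The symmetries \eqref{eq: lmu formula 2} follow at once from \eqref{eq: gmu formula 2} together with the two forms in \eqref{eq: lmu and gmu}. For \eqref{eq: lmu formula 3}, substitute $x\mapsto qx$ in \eqref{eq: gmu formula 3}. Then $(a,aqx)_\infty\to1$ and $(xq)^{n}$-type terms reorganize. The ${}_0\psi_2$ parameters become $(q/y,0)$ with argument $q^2x/y$, which is to be compared with $(1/y,0;x/y)$ after using the second form $\widehat{\mu}(x,qy;a)$. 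We also use termwise convergence, justified by dominated convergence since the ${}_0\psi_2$ converges uniformly in $a$ near $0$.

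The very-well-poised expression \eqref{eq: lmu formula 3.5} is the delicate limit. In \eqref{eq: gmu formula 4} the ratio $(x,aq/y)_n/(x/a,q/y)_n$ contains $(x/a)_n$, which behaves like $(-x/a)^nq^{n(n-1)/2}$ as $a\to0$ for each fixed $n$. Combined with $(x^2/(ay^2))^n$ and the prefactor $(y/a)_\infty$, this has to produce the factor $q^{5n(n-1)/2}$, with an $a$-dependence $a^{\pm n}$ that must balance. The prefactor $(y/a)_\infty$ diverges, so I would first rewrite $(y/a)_\infty$ and $\theta(x/y)$ via $\theta(y/a)$ and balance powers of $a$ before taking the limit. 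This bookkeeping, together with justifying the interchange of limit and bilateral sum (uniform domination for $n\to-\infty$), is where I expect the main obstacle. A fallback is to verify \eqref{eq: lmu formula 3.5} directly: show that the right-hand side satisfies \eqref{eq: lmu formula 1}, and compare residues at the poles $x\in q^{\Zz}$ with those of the definition.

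\textbf{Limit, connection and contiguous formulas.} The limit formula \eqref{eq: lmu formula 4} follows from \eqref{eq: gmu formula 5} with $x\mapsto qx$, or directly from \eqref{eq: lmu formula 3}, since $\theta(y)/(qy)_\infty\to(1-y)\cdots$ picks out the residue term. The connection formula \eqref{eq: lmu formula 5} is linear with $a$-independent theta coefficients, so it passes to the limit directly from \eqref{eq: gmu formula 6}; the powers $(xy)^{\alpha/2}$ agree on both sides because $xy$ is invariant under $(x,y)\mapsto(x/c,yc)$. Similarly \eqref{eq: lmu formula 6} comes from \eqref{eq: gmu formula 7}, with the shifts $x\mapsto qx$ converting $\theta(x)$ into $\theta(xq)=-x^{-1}\theta(x)$-type factors and $(a,aqxy)_\infty\to1$. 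Finally, \eqref{eq: lmu formula 7} holds by induction on $n$, in both directions. The cases $n=0,1$ are trivial from the initial values. The function $n\mapsto l\widehat{\mu}(xq^{n-1},y)$ satisfies, by \eqref{eq: lmu formula 1}, the recursion $F_{n}=F_{n-1}+xyq^{n-2}F_{n-2}$, which is \eqref{eq: q-Fibonacci} with $t=xy$. Since $S_n$ and $T_{n-1}$ form a basis of solutions of this recursion, the identity follows.
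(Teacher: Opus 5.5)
Your strategy is the paper's. The paper proves \eqref{eq: lmu formula 1}--\eqref{eq: lmu formula 6} in one sentence, by letting $a\to0$ in \eqref{eq: gmu formula 1}--\eqref{eq: gmu formula 7} via \eqref{eq: lmu and gmu}. It proves \eqref{eq: lmu formula 7} from the recursion \eqref{eq: q-Fibonacci} with $t=xy$ together with the cases $n=0,1$, exactly as you propose. Your limits for \eqref{eq: lmu formula 1}, \eqref{eq: lmu formula 2}, \eqref{eq: lmu formula 3}, \eqref{eq: lmu formula 5}, \eqref{eq: lmu formula 6} and \eqref{eq: lmu formula 7} go through. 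Working at $(x,qy)$ and using $\theta(qz)=-\theta(z)/z$, the coefficients in \eqref{eq: lmu formula 5} come out unchanged. In \eqref{eq: gmu formula 7} the ${}_0\phi_1$ argument must be read as $xyq$, not $xq$, to produce $xyq^2$. Your derivation of \eqref{eq: lmu formula 4} directly from \eqref{eq: lmu formula 3} is cleaner than the paper's route, because it avoids interchanging $a\to0$ with $y\to1$.

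The genuine gap is \eqref{eq: lmu formula 3.5}, and the mechanism you propose cannot close it.
\begin{itemize}
\item Since $(x/a)_n$ sits in the denominator, $a^{-n}/(x/a)_n\to(-x)^{-n}q^{-\binom{n}{2}}$ for every $n\in\Zz$. So the powers of $a$ already cancel termwise, and the $n$-independent factor $(qy/a)_\infty$ cannot change the $n$-dependence of the summand, however you rewrite it.
\item With a summable majorant (along a generic sequence $a\in a_0q^{\mathbb{N}}$), the sum in \eqref{eq: gmu formula 4} at $(x,qy)$ tends to $\sum_n(1-\frac{x}{y}q^{2n-1})\frac{(x)_n}{(1/y)_n}q^{(3n^2-n)/2}(-\frac{x}{q^2y^2})^n$. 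This is a $q^{3\binom{n}{2}}$-series, not a $q^{5\binom{n}{2}}$-series.
\item This limit series is the $c,d,e\to\infty$ case of Bailey's ${}_6\psi_6$ sum, with very-well-poised parameter $x/(qy)$. It equals $(q)_\infty\theta(x/(qy))/(1/y,q/x)_\infty\neq0$.
\item Hence the normalized right side of \eqref{eq: gmu formula 4} behaves like $iq^{-1/8}(qy/a)_\infty/(\theta(x)\theta(1/y))$ and diverges.
\end{itemize}
So \eqref{eq: gmu formula 4} as printed cannot be degenerated; it is corrupted. For instance, at $a=q$ its right side has $x$-poles only at $1$, $q$ and $yq^{\Zz}$, whereas Zwegers' $\mu$ has poles at every point of $q^{\Zz}$. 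The paper's one-line proof does not address this, so you need an independent argument. One option is a correct very-well-poised formula for $\widehat{\mu}$ in which one more numerator parameter tends to $\infty$ as $a\to0$. The series in \eqref{eq: lmu formula 3.5} is the balanced very-well-poised ${}_8\psi_8$ with $b=x/(qy)$, numerator parameter $x$, and five parameters sent to $\infty$.

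The other option is your fallback, which is viable. A solution of \eqref{eq: lmu formula 1} that is holomorphic on $\mathbb{C}^*$ must vanish, because its Laurent coefficients satisfy $(q^n-1)d_n=yq^{1-n}d_{n-1}$. However, the real work is not sketched: showing the series satisfies \eqref{eq: lmu formula 1}, that its apparent poles at $x\in yq^{\Zz}$ cancel, and that its residues at $q^{\Zz}$ match. Doing this would also show that the prefactor must be $1/\bigl((x,yq,q)_\infty\theta(x/(qy))\bigr)$. Indeed, multiplying \eqref{eq: lmu formula 3.5} as printed by $\theta(y)$ and letting $y\to1$ gives $(q)_\infty$ times the right side of \eqref{eq: lmu formula 4}. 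This is already visible in the constant term in $x$ after cancelling $\theta(qx)$.
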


The above formulas of Theorem \ref{thm: lmu property} correspond to the formulas of the generalized $\mu$-function as \eqref{eq: gmu formula 1} $\leftrightarrow$ \eqref{eq: lmu formula 1}, \eqref{eq: gmu formula 2} $\leftrightarrow$ \eqref{eq: lmu formula 2}, \eqref{eq: gmu formula 3} $\leftrightarrow$ \eqref{eq: lmu formula 3}, \eqref{eq: gmu formula 4} $\leftrightarrow$ \eqref{eq: lmu formula 3.5}, \eqref{eq: gmu formula 5} $\leftrightarrow$ \eqref{eq: lmu formula 4}, \eqref{eq: gmu formula 6} $\leftrightarrow$ \eqref{eq: lmu formula 5} and \eqref{eq: gmu formula 7} $\leftrightarrow$ \eqref{eq: lmu formula 6}.

Next, according \eqref{eq: Schur Fibo} and \eqref{eq: corresponding between q-Fibonacci and Raman ujan eq}, we introduce
\begin{align*}
M_n(x;q):=-iq^\frac{1}{8}l\widehat{\mu}(x,q^{n-1}/x)=\frac{1}{(x,q)_\infty\theta(q^n/x)}{}_1\psi_2\hyper{x}{0,0}{q}{x q^{1-n}}. 
\end{align*}
\begin{thm}
\label{thm: M_n and q-Fibonacci}
For $n\in\Zz$, we obtain
\begin{align}
\label{eq: M_n formula 1}
M_n(x;q)
  &=
  M_n(x^{-1},q)=M_n(x q,q), \\
\label{eq: M_n formula 2}  
M_n(x;q)
  &=
  \frac{1}{(q^{n+1}/x,q)_\infty\theta(x)}
  {}_0\psi_2\hyper{-}{xq^{-n},0}{q}{\frac{x^2}{q^n}} \\
\label{eq: M_n formula 3}
  &=
  \frac{1}{(x,q^{1-n}/x)_\infty}\sum_{k\in\Zz}\left(1-x^2q^{2k-n-1}\right)\frac{(x)_k}{(xq^{-n})_k}q^{\frac{k(5k-7)}{2}}(-x^3q^{-n})^k, \\
\label{eq: M_n formula 4}
  {}_0\phi_1\hyper{-}{0}{q}{q^{n+1}}&=\frac{(-1)^nq^{-\frac{n(n-1)}{2}}\theta(x)^2\theta(y)^2}{y\theta(x/y)\theta(xy)}(M_{n-1}(x;q)-M_{n-1}(y;q)), \\
  M_n(x;q)&=G(q)H(q)\left(\frac{\theta(x^5q^2;q^5)}{\theta(x q)\theta(x^2)}+\frac{\theta(x^5q^3;q^5)}{\theta(x)\theta(x^2q)}\right)T_{n-1}(q)
  \nonumber\\
\label{eq: M_n and S_n and T_n}
  &\quad+G(q)H(q)\left(\frac{\theta(x^5q;q^5)}{\theta(x)\theta(x^2)}+\frac{\theta(x^5q^4;q^5)}{\theta(x q)\theta(x^2q)}\right)S_n(q). 
\end{align} 
In particular, when $n=0$ and $1$, we have 
\begin{align}
\label{eq: M_0}
  M_{0}(x;q)
  &=
  \frac{1}{(q/x,q)_\infty\theta(x)}{}_0\psi_2\hyper{-}{x,0}{q}{x^2}
  =
  G(q)H(q)\left(\frac{\theta(x^5q^2;q^5)}{\theta(x q)\theta(x^2)}+\frac{\theta(x^5q^3;q^5)}{\theta(x)\theta(x^2q)}\right), \\
\label{eq: M_1}
M_1(x;q)
  &=
  \frac{1}{(q^2/x,q)_\infty\theta(x)}{}_0\psi_2\hyper{-}{x/q,0}{q}{\frac{x^2}{q}}
  =
  G(q)H(q)\left(\frac{\theta(x^5q;q^5)}{\theta(x)\theta(x^2)}+\frac{\theta(x^5q^4;q^5)}{\theta(x q)\theta(x^2q)}\right). 
\end{align}
\end{thm}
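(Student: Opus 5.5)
Most of the theorem follows by specializing Theorem \ref{thm: lmu property} at $y=q^{n-1}/x$; only the explicit evaluation in \eqref{eq: M_0}--\eqref{eq: M_1} needs new input. The plan is to do the specializations first and then reduce the general formula \eqref{eq: M_n and S_n and T_n} to the two cases $n=0,1$.

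\textbf{Specializations.} By definition, $M_n(x;q)=-iq^{1/8}l\widehat{\mu}(x,q^{n-1}/x)$.
\begin{itemize}
\item For \eqref{eq: M_n formula 1}, the symmetry $l\widehat{\mu}(x,y)=l\widehat{\mu}(y,x)$ from \eqref{eq: lmu formula 2} gives $M_n(x)=-iq^{1/8}l\widehat{\mu}(q^{n-1}/x,x)$. Then \eqref{eq: lmu formula 2} in the form $l\widehat{\mu}(X,Y)=l\widehat{\mu}(X/q,qY)$ shifts the pair to $(q^{n}/x,\,x/q)$ and further. The aim is to match this with $M_n(x^{-1})$ and $M_n(xq)$. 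Since $M_n(x^{-1})=-iq^{1/8}l\widehat{\mu}(1/x,q^{n-1}x)$, I also expect to need the factor $\theta(qy)$ together with the quasi-periodicity $\theta(qx)=-x^{-1}\theta(x)$, and to check that the prefactors cancel.
\item Formulas \eqref{eq: M_n formula 2} and \eqref{eq: M_n formula 3} come from substituting $y=q^{n-1}/x$ into \eqref{eq: lmu formula 3} and \eqref{eq: lmu formula 3.5}. The only work is simplifying parameters: $1/y=xq^{1-n}$, $x/y=x^2q^{1-n}$, and $q^{5n(n-1)/2}$ combined with $(-x^2/(y^3q))^k$ gives $q^{k(5k-7)/2}(-x^3q^{-n})^k$. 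Index shifts $k\to k\pm1$ may be needed to reach the displayed normalization.
\item For \eqref{eq: M_n formula 4}, apply \eqref{eq: lmu formula 6} with first argument $x$, second argument $q^{n-2}/x$, and $c=x/y$. Then $l\widehat{\mu}(x/c,yc)$ becomes $l\widehat{\mu}(y,q^{n-2}/y)$, the product $xyq^2$ in the ${}_0\phi_1$ becomes $q^n$, and we obtain $M_{n-1}(x)-M_{n-1}(y)$ as a theta-multiple of ${}_0\phi_1(-;0;q;q^{n+1})$. The thetas $\theta(q^{n-1}/x)$ and similar factors are then simplified to the stated prefactor using quasi-periodicity.
\end{itemize}

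\textbf{Reduction to $n=0,1$.} The recursion \eqref{eq: lmu formula 7}, taken at $xy=q^{n}$ after the shift needed to make the arguments consistent, shows that $n\mapsto M_n(x;q)$ satisfies the Schur recursion $F_n=F_{n-1}+q^{n-2}F_{n-2}$. This follows from \eqref{eq: lmu formula 1} with $y$ replaced so that the pair runs along $l\widehat{\mu}(xq^{k},q^{n-1}/x)$, and then using \eqref{eq: lmu formula 2} to rewrite $l\widehat{\mu}(xq^k,y)=l\widehat{\mu}(x,yq^k)$. Hence $M_n=M_0T_{n-1}(q)+M_1S_n(q)$, because $T$ and $S$ form the basis with the initial values at indices $0$ and $1$. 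So \eqref{eq: M_n and S_n and T_n} reduces to \eqref{eq: M_0} and \eqref{eq: M_1}.

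The first equalities in \eqref{eq: M_0} and \eqref{eq: M_1} are \eqref{eq: M_n formula 2} at $n=0,1$.

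\textbf{Main obstacle: the theta-product evaluations.} Here is how I would establish them.
\begin{itemize}
\item The function $F(x)=\theta(x)(q/x,q)_\infty\cdots$, essentially $\theta(x)\theta(x^2)\theta(xq)\theta(x^2q)M_0(x)$, is holomorphic on $\Cz^*$. Using $M_0(x)=M_0(xq)=M_0(1/x)$, it satisfies a quasi-periodicity of theta type of degree $5$.
\item The right-hand side, with the same denominators cleared, lies in the same space of theta functions of degree $5$ with a prescribed symmetry. The space of such functions that are also invariant under $x\to1/x$ has small dimension, about $2$ or $3$.
\item I would compare the two sides at enough special points, for example $x\to1$, $x=-1$, $x=\pm q^{1/2}$, where the denominators vanish and residues can be matched. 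At $x\to1$, the limit via \eqref{eq: M_n formula 4} or \eqref{eq: lmu formula 4} gives ${}_0\phi_1(-;0;q;q)=G(q)$ and ${}_0\phi_1(-;0;q;q^2)=H(q)$, by the Rogers--Ramanujan identities.
\item The products $G(q)H(q)$ should come from combining these with $\theta$-values and the quintuple product identity, which is what produces $\theta(x^5q^j;q^5)$.
\end{itemize}
If the residue matching becomes messy, the alternative is to expand the ${}_0\psi_2$ in \eqref{eq: M_0} directly via the very-well-poised form \eqref{eq: M_n formula 3} at $n=0$, where the summand $q^{k(5k-7)/2}$ suggests splitting into quintuple-product series in $x^5$ with nome $q^5$.
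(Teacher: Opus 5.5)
Your overall architecture matches the paper's. You get \eqref{eq: M_n formula 1}--\eqref{eq: M_n formula 4} by specializing Theorem~\ref{thm: lmu property} at $y=q^{n-1}/x$, and \eqref{eq: M_n and S_n and T_n} from \eqref{eq: lmu formula 7} at $xy=1$ together with the cases $n=0,1$.

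The gap is in your primary route to the theta evaluations in \eqref{eq: M_0}--\eqref{eq: M_1}.
\begin{itemize}
\item Cleared correctly, $h(x)=\theta(x)^2M_0(x)$ is holomorphic on $\Cz^*$, because the only poles of $M_0$ are double poles on $q^{\Zz}$. It satisfies $h(qx)=x^{-2}h(x)$.
\item This space is $2$-dimensional, spanned (for generic $y_0$) by $\theta(x)^2$ and $\theta(x/y_0)\theta(xy_0)$. The inversion symmetry $x\to1/x$ imposes nothing further.
\item The limit $x\to1$, where ${}_0\phi_1(-;0;q;q)=G(q)$ enters, fixes only the coefficient of $\theta(x/y_0)\theta(xy_0)$.
\item The coefficient of $\theta(x)^2$, i.e.\ an additive constant in $M_0$, is invisible to any pole data. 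The connection formula \eqref{eq: lmu formula 6} likewise controls only $M_0(x)-M_0(y)$.
\item At $x=-1,\pm q^{1/2}$ the function $M_0$ is holomorphic, so there are no residues to match there, only removable singularities of the right-hand side.
\end{itemize}
Fixing the constant requires the value of $M_0$ at a regular point. For example, $M_0(-1)$ is essentially the bilateral sum $\sum_{k\in\Zz}q^{k^2}/(-q;q)_k$, and nothing in your plan supplies such a value. So the residue-matching route cannot close as described.

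Your fallback is exactly the paper's argument, and it is short. Put $y=1/(qx)$ (for $M_0$), resp.\ $y=1/x$ (for $M_1$), into \eqref{eq: lmu formula 3.5}.
\begin{itemize}
\item The factor $\bigl(1-\frac{x}{y}q^{2k-1}\bigr)\frac{(x)_k}{(1/y)_k}$ collapses to $(1-x)(1+xq^k)$, resp.\ $1-x^2q^{2k-1}$.
\item Combined with $q^{5k(k-1)/2}(-x^5q^{2})^k$, resp.\ $q^{5k(k-1)/2}(-x^5q^{-1})^k$, each half of the sum is a Jacobi triple product in nome $q^5$.
\item This produces $\theta(x^5q^2;q^5)$ and $\theta(x^5q^3;q^5)$, resp.\ $\theta(x^5q;q^5)$ and $\theta(x^5q^4;q^5)$ up to elementary factors.
\item No quintuple product is involved, and $G(q)H(q)=(q^5;q^5)_\infty/(q;q)_\infty$ is just the normalization.
\end{itemize}

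The specializations are also not merely ``simplifying parameters''. Done literally:
\begin{itemize}
\item $y=q^{n-1}/x$ in \eqref{eq: lmu formula 3} gives the right side of \eqref{eq: M_n formula 2} with $n$ replaced by $n-1$. This is a shift in $n$, which no shift $k\to k\pm1$ can absorb.
\item In \eqref{eq: lmu formula 3.5} one gets $x^2/(y^3q)=x^5q^{2-3n}$, so $x^5$ appears, not $x^3$.
\item \eqref{eq: lmu formula 6} with second argument $q^{n-2}/x$ pairs $M_{n-1}$ with ${}_0\phi_1(-;0;q;q^{n})$, not with $q^{n+1}$, as you yourself computed.
\end{itemize}
Your own $x\to1$ test detects this. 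The ${}_0\psi_2$ expression displayed in \eqref{eq: M_0} has $(1-x)^{-2}$-coefficient $H(q)/(q;q)_\infty^4$, while $M_0$ has $-G(q)/(q;q)_\infty^4$. So the displayed indices have to be corrected, not matched. The paper's ``immediately'' passes over this as well.
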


Moreover, we calculate the following Wronski determinants involving the quadratic relations of the Rogers-Ramanujan continued fraction. 
\begin{thm}
\label{thm: lmu Wronskian}
For $m, n\in\Zz$, we have 
 \begin{align}
 &l\widehat{\mu}\left(\frac{x}{c}q^{n-1},yc\right)l\widehat{\mu}\left(xq^{m-1},y\right)
 -l\widehat{\mu}\left(\frac{x}{c}q^{m-1},yc\right)l\widehat{\mu}\left(xq^{n-1},y\right)
   \nonumber \\
 \label{eq: Wronskian relation 1}
 &=\frac{q^{-\frac{1}{4}}\theta(c)\theta(yc/x)}{\theta(c/x)\theta(yc)\theta(x)\theta(y)}\left(S_{m}(xy,q)T_{n-1}(xy,q)-S_n(xy,q)T_{m-1}(xy, q)\right), \\
 &(-xy)^nq^\frac{n(n-1)}{2}{}_0\phi_1\hyper{-}{0}{q}{xyq^{n+1}}l\widehat{\mu}\left(xq^{m-1},y\right)-(-xy)^mq^\frac{m(m-1)}{2}{}_0\phi_1\hyper{-}{0}{q}{xyq^{m+1}}l\widehat{\mu}\left(xq^{n-1},y\right)
   \nonumber \\
 \label{eq: Wronskian relation 2}
 &=iq^{-\frac{1}{8}}\left(S_{m}(xy,q)T_{n-1}(xy,q)-S_n(xy,q)T_{m-1}(xy, q)\right). 
 \end{align}
In particular, when $m=n+1$, we have
 \begin{align}
 \label{eq: n-Wronskian relation 1}
 l\widehat{\mu}\left(\frac{x}{c}q^{n-1},y c\right)l\widehat{\mu}\left(xq^{n},y\right)-l\widehat{\mu}\left(\frac{x}{c}q^{n},yc\right)l\widehat{\mu}\left(xq^{n-1},y\right)
   &=
   \frac{(-xy)^nq^{\frac{n(n-1)}{2}-\frac{1}{4}}\theta(c)\theta(yc/x)}{\theta(c/x)\theta(yc)\theta(x)\theta(y)}, \\
 \label{eq: n-Wronskian relation 2}
{}_0\phi_1\hyper{-}{0}{q}{xyq^{n+1}}l\widehat{\mu}\left(xq^n,y\right)+ xyq^n{}_0\phi_1\hyper{-}{0}{q}{xyq^{n+2}}l\widehat{\mu}\left(xq^{n-1},y\right)&=iq^{-\frac{1}{8}}. 
 \end{align}
Furthermore, when $n=0$, we have
\begin{align}
\label{eq: lmu Wronskian 1}
l\widehat{\mu}\left(\frac{x}{cq},yc\right)l\widehat{\mu}\left(x,y\right)-l\widehat{\mu}\left(\frac{x}{c},yc\right)l\widehat{\mu}\left(\frac{x}{q},y\right)
  &=
  \frac{q^{-\frac{1}{4}}\theta(c)\theta(yc/x)}{\theta(c/x)\theta(yc)\theta(x)\theta(y)}, \\
\label{eq: lmu Wronskian 2}
{}_0\phi_1\hyper{-}{0}{q}{xyq}l\widehat{\mu}(x,y)+xy{}_0\phi_1\hyper{-}{0}{q}{xyq^2}l\widehat{\mu}(x/q,y)
  &=
  iq^{-\frac{1}{8}}. 
\end{align}
\end{thm}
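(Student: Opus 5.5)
The plan is to reduce everything to the $n=0$ identities \eqref{eq: lmu Wronskian 1} and \eqref{eq: lmu Wronskian 2}, which are Casoratian evaluations. The general cases then follow by linear algebra from the contiguous relation \eqref{eq: lmu formula 7}.

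\textbf{The case $n=0$.} Fix $y$ and $c$, and put $F(x):=l\widehat{\mu}(x/q,y)$ and $G(x):=l\widehat{\mu}(x/(cq),yc)$. By \eqref{eq: lmu formula 1}, both satisfy the Ramanujan recursion $h(qx)=h(x)+xy\,h(x/q)$ in $x$; for $G$ this uses that the product $(x/c)(yc)=xy$ is unchanged. Hence the Casoratian
\begin{align*}
W(x):=G(x)\,l\widehat{\mu}(x,y)-l\widehat{\mu}(x/c,yc)\,F(x)
\end{align*}
satisfies $W(qx)=-xy\,W(x)$, which is the same factor as that of $q^{-1/4}\theta(c)\theta(yc/x)/(\theta(c/x)\theta(yc)\theta(x)\theta(y))$. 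To compute $W$ I would not solve this functional equation. Instead I would substitute the connection formula \eqref{eq: lmu formula 6} at $x$ and at $x/q$: $l\widehat{\mu}(x,y)=l\widehat{\mu}(x/c,yc)-K(x)\phi(x)$, where $\phi(x)={}_0\phi_1(-;0;q;xyq^2)$. The product terms cancel, leaving
\begin{align*}
W=K(x/q)\,\phi(x/q)\,l\widehat{\mu}(x/c,yc)-K(x)\,\phi(x)\,l\widehat{\mu}(x/(cq),yc).
\end{align*}
The prefactors can be related using $\theta(x)=\theta(q/x)$ and $\theta(qx)=-x^{-1}\theta(x)$, which gives $K(x/q)=\kappa\,K(x)$ with $\kappa=-xy$ up to the precise sign.

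This reduces \eqref{eq: lmu Wronskian 1} to the mixed Casoratian \eqref{eq: lmu Wronskian 2} with $(x,y)$ replaced by $(x/c,yc)$. Conversely, \eqref{eq: lmu Wronskian 2} gives \eqref{eq: lmu Wronskian 1} after multiplying by the explicit theta quotient.

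\textbf{Proving \eqref{eq: lmu Wronskian 2}.} Its left side is the Casoratian of $l\widehat{\mu}$ with the convergent solution $(-xy)^n q^{n(n-1)/2}\phi$. It therefore satisfies $W(qx)=W(x)$ up to the factor from the gauge $(-xy)^n q^{\binom n2}$. With the normalization chosen in the statement it is invariant under $x\mapsto qx$, and it depends on $x$ only through $x$ at fixed $xy$, so it is an elliptic function of $x$.

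I would show it has no poles. The apparent poles of $l\widehat{\mu}(x,y)$ at $x\in q^{\Zz}$ come from $1/(x)_\infty$. I expect them to cancel because ${}_1\psi_2(x;0,0;q;1/y)/(x)_\infty$ is regular in $x$, since $(x)_n/(x)_\infty=1/(xq^n)_\infty$. The theta factor $1/\theta(qy)$ is constant in $x$. A pole-free elliptic function is constant, so $W$ is constant. I would evaluate it by letting $x\to0$ with $xy$ fixed, or more cleanly by taking the limit $y\to1$ via \eqref{eq: lmu formula 4} together with the symmetry \eqref{eq: lmu formula 2}. The constant should be $iq^{-1/8}$. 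The alternative is to match the leading terms of the ${}_0\psi_2$ expansion \eqref{eq: lmu formula 3}. I expect this determination of the constant, together with checking that the $x$-dependence really is elliptic, to be the main obstacle.

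\textbf{General $m,n$.} By \eqref{eq: lmu formula 7}, each of $l\widehat{\mu}(xq^{k-1},y)$ and $l\widehat{\mu}(xq^{k-1}/c,yc)$ equals $T_{k-1}(xy,q)$ times the $x/q$ value plus $S_k(xy,q)$ times the $x$ value, again because the product is still $xy$. For $\phi$ the same relation holds by \eqref{eq: corresponding between q-Fibonacci and Raman ujan eq}. Any $2\times2$ determinant built from two solutions sampled at indices $m$ and $n$ then factors as
\begin{align*}
\left(S_m T_{n-1}-S_n T_{m-1}\right)\times(\text{the } n=0 \text{ Casoratian}).
\end{align*}
This gives \eqref{eq: Wronskian relation 1} and \eqref{eq: Wronskian relation 2}.

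The special case $m=n+1$ uses the Casoratian of the Fibonacci pair, $S_{n+1}T_{n-1}-S_nT_n=(-xy)^nq^{n(n-1)/2}$. This follows by induction from \eqref{eq: q-Fibonacci} with the initial values $S_1T_{-1}-S_0T_0=1$. It yields \eqref{eq: n-Wronskian relation 1} and \eqref{eq: n-Wronskian relation 2}; the latter after dividing by $(-xy)^nq^{\binom n2}$.
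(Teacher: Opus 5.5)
Your linear-algebra steps are sound. Expanding both rows with \eqref{eq: lmu formula 7} gives $S_mT_{n-1}-S_nT_{m-1}$ times the $n=0$ Casoratian; the paper does the same by inverting the system. Deriving \eqref{eq: lmu Wronskian 1} from \eqref{eq: lmu Wronskian 2} by inserting \eqref{eq: lmu formula 6} is a valid and more direct alternative to the paper's argument, which shows the normalized $c$-Casoratian is $q$-periodic in $c$ and evaluates it as $c\to1/y$ via \eqref{eq: lmu formula 4}. One correction: $K(x/q)=-K(x)/(xy)$, not $-xy\,K(x)$. Write $\mathcal{M}_0(x,y)$ for the left side of \eqref{eq: lmu Wronskian 2}. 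Then $W=-\frac{K(x)}{xy}\mathcal{M}_0(x/c,yc)$, and $\theta(qx)\theta(qy)=\theta(x)\theta(y)/(xy)$ turns this into exactly the right side of \eqref{eq: lmu Wronskian 1} once $\mathcal{M}_0=iq^{-1/8}$.

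The genuine gap is that you never establish $\mathcal{M}_0=iq^{-1/8}$, and every identity in the theorem rests on it.

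\textbf{Your regularity argument is wrong.} $(x)_n/(x)_\infty=1/(xq^n)_\infty$ is not entire, and $l\widehat{\mu}(x,y)$ has genuine simple poles at $x\in q^{\Zz}$. For instance, by \eqref{eq: lmu formula 4} and the symmetry in \eqref{eq: lmu formula 2}, $\theta(x)\,l\widehat{\mu}(x,y)\to iq^{-1/8}\theta(qy)^{-1}\,{}_0\phi_1\hyper{-}{0}{q}{yq^2}$ as $x\to1$. The poles of $\mathcal{M}_0$ cancel only between its two terms.

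\textbf{Two of your three routes to the constant fail.} With $xy=z$ fixed, both $l\widehat{\mu}(x,z/x)$ and $l\widehat{\mu}(x/q,z/x)$ are $q$-periodic in $x$ by \eqref{eq: lmu formula 2}, so there is no limit $x\to0$ to take. In the limit $y\to1$, multiply by $\theta(y)$ and use \eqref{eq: lmu formula 4} with $\theta(qx)=-\theta(x)/x$. The two terms then contribute $\mp iq^{-1/8}\frac{x}{\theta(x)}\,{}_0\phi_1\hyper{-}{0}{q}{xq}\,{}_0\phi_1\hyper{-}{0}{q}{xq^2}$ and cancel. You only learn $\theta(y)\mathcal{M}_0\to0$; the constant would need the next Laurent coefficient at $y=1$, which nothing available provides. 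The ${}_0\psi_2$ suggestion is not carried out.

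\textbf{How the paper gets the constant, and a cheaper fix.} The paper evaluates at $x=q^{1/5}$, $y=q^{-1/5}$, where $\mathcal{M}_0=iq^{-1/8}\bigl(G(q)M_1(q^{1/5};q)+H(q)M_0(q^{1/5};q)\bigr)$. It then uses the product formulas \eqref{eq: M_0}, \eqref{eq: M_1} and Watson's identity behind \eqref{eq: RR and little mu}. Your $x\to0$ idea works, and more cheaply, if you fix $y$ rather than $xy$. As you note, $\mathcal{M}_0(qx,y)=\mathcal{M}_0(x,y)$, so $\mathcal{M}_0(x,y)=\lim_{k\to\infty}\mathcal{M}_0(xq^k,y)$. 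Start from
\begin{align*}
l\widehat{\mu}(x,y)=\frac{iq^{-\frac{1}{8}}}{(q)_\infty\theta(qy)}\sum_{n\in\Zz}\frac{(-1)^nq^{\frac{n(n-1)}{2}}y^{-n}}{(xq^n)_\infty}.
\end{align*}
Dominated convergence applies; the tail $n<-k$ is controlled because $|(xq^{-m})_m|$ grows like $|q|^{-m(m+1)/2}$ up to a geometric factor. It gives $l\widehat{\mu}(xq^k,y)\to iq^{-1/8}\theta(1/y)/\theta(qy)=iq^{-1/8}$. Meanwhile ${}_0\phi_1\hyper{-}{0}{q}{xyq^{k+1}}\to1$, and the second term carries the factor $xyq^k\to0$ against the convergent $l\widehat{\mu}(xq^{k-1},y)$. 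Hence $\mathcal{M}_0=iq^{-1/8}$, with no pole analysis or Liouville argument; combined with the paper's evaluation, this also reproves Watson's identity.
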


\begin{coro}
\label{thm: lmu identities}
For $m, n\in\Zz$, we have 
 \begin{align}
 &M_n(y,q)M_m(x,q)-M_m(y,q)M_n(x,q)\nonumber \\
 &=x\frac{\theta(xy)\theta(y/x)}{\theta(x)^2\theta(y)^2}(S_m(q)T_{n-1}(q)-S_n(q)T_{m-1}(q))
  , \\
 &(-1)^nq^\frac{n(n-1)}{2}{}_0\phi_1\hyper{-}{0}{q}{q^{n+1}}M_m(x;q)-(-1)^mq^\frac{m(m-1)}{2}{}_0\phi_1\hyper{-}{0}{q}{q^{m+1}}M_n(x;q)
   \nonumber \\
 \label{eq: Wronskian relation}
 &=S_{m}(q)T_{n-1}(q)-S_n(q)T_{m-1}(q). 
 \end{align}
In particular, when $m=n+1$, we have 
 \begin{align}
 M_n(y,q)M_{n+1}(x,q)-M_{n+1}(y,q)M_n(x,q)
   &=
   (-1)^{n}xq^\frac{n(n-1)}{2}\frac{\theta(xy)\theta(y/x)}{\theta(x)^2\theta(y)^2}
   , \\
 \label{eq: n-Wronskian relation}
 {}_0\phi_1\hyper{-}{0}{q}{q^{n+1}}M_{n+1}(x;q)+q^n{}_0\phi_1\hyper{-}{0}{q}{q^{n+2}}M_n(x;q)
   &=1. 
 \end{align}
Furthermore, when $n=0$, we have
\begin{align}
M_0(y,q)M_1(x,q)-M_1(y,q)M_0(x,q)
  &=
  x\frac{\theta(xy)\theta(y/x)}{\theta(x)^2\theta(y)^2}
  ,
  \\
G(q)M_1(x;q)+H(q)M_{0}(x;q)
  &=
  G(q)M_1\left(q^\frac{1}{5};q\right)+H(q)M_{0}\left(q^\frac{1}{5};q\right)
  \nonumber \\
  &=
  \frac{\eta(5\tau)}{\eta(\tau/5)}\left(\frac{1}{R(q)}-1-R(q)\right)
  \nonumber \\
\label{eq: RR and little mu}
  &=
  1, 
\end{align}
where $R(q)$ is the Rogers-Ramanujan continued fraction: 
$$R(q):=q^\frac{1}{5}\frac{H(q)}{G(q)}=\cfrac{q^\frac{1}{5}}{1+\cfrac{q}{1+\cfrac{q^2}{1+\cdots}}}, $$
and $\eta(\tau)$ is the Dedekind eta function: 
\begin{align*}
\eta(\tau):=q^\frac{1}{24}(q)_\infty. 
\end{align*}
\end{coro}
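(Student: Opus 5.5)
The plan is to obtain every identity by specializing Theorem~\ref{thm: lmu Wronskian} to the hyperbola $xy=1$. This is the specialization that turns $l\widehat{\mu}$ into $M_n$ and the $q,t$-Fibonacci sequences into Schur's sequences \eqref{eq: Schur Fibo}. Throughout we use $l\widehat{\mu}(zq^{n-1},1/z)=iq^{-\frac18}M_n(z;q)$, which is the definition of $M_n$ with $x\mapsto zq^{n-1}$ combined with $M_n(xq;q)=M_n(x;q)$ from \eqref{eq: M_n formula 1}. We also use $\theta(1/u)=-\theta(u)/u$ and $S_n(1,q)=S_n(q)$, $T_n(1,q)=T_n(q)$.

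\emph{First identity.} In \eqref{eq: Wronskian relation 1} put $y=1/x$ and $c=x/z$. Then $x/c=z$ and $yc=1/z$, so the left side becomes $(iq^{-\frac18})^2\bigl(M_n(z)M_m(x)-M_m(z)M_n(x)\bigr)$, which equals $-q^{-\frac14}(\cdots)$. The theta prefactor is
\[
\frac{\theta(x/z)\theta(1/(xz))}{\theta(1/z)^2\theta(x)\theta(1/x)}.
\]
Reducing each inverted argument by $\theta(1/u)=-\theta(u)/u$ gives
\[
-x\,\frac{\theta(xz)\theta(z/x)}{\theta(x)^2\theta(z)^2}.
\]
The factors $-q^{-\frac14}$ on the two sides cancel, and renaming $z\mapsto y$ gives the first identity.

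\emph{Second identity \eqref{eq: Wronskian relation}.} Put $y=1/x$ in \eqref{eq: Wronskian relation 2}. Then $(-xy)^n=(-1)^n$, and each $l\widehat{\mu}$ becomes $iq^{-\frac18}M$. Dividing by $iq^{-\frac18}$ gives the claim.

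\emph{Cases $m=n+1$ and $n=0$.} These are obtained the same way from \eqref{eq: n-Wronskian relation 1}, \eqref{eq: n-Wronskian relation 2}, \eqref{eq: lmu Wronskian 1} and \eqref{eq: lmu Wronskian 2}. Alternatively, they follow from the general case using the Casoratian identity
\[
S_{n+1}(q)T_{n-1}(q)-S_n(q)T_n(q)=(-1)^nq^{\frac{n(n-1)}{2}},
\]
which follows from \eqref{eq: q-Fibonacci} by induction. The last identity with $n=0$ reads $\,{}_0\phi_1(-;0;q;q)M_1(x)+{}_0\phi_1(-;0;q;q^2)M_0(x)=1$. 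From the definition of ${}_r\phi_s$ with $s-r+1=2$, the coefficients are $q^{n(n-1)}q^n/(q)_n=q^{n^2}/(q)_n$ and $q^{n^2+n}/(q)_n$. Hence these series are exactly $G(q)$ and $H(q)$, which gives $G(q)M_1(x;q)+H(q)M_0(x;q)=1$ for all $x$. In particular this holds at $x=q^{\frac15}$.

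\emph{Middle expression, the main obstacle.} Substitute $x=q^{\frac15}$ into the theta-quotient formulas \eqref{eq: M_0} and \eqref{eq: M_1}. The quantities $\theta(x^5q^j;q^5)$ become $\theta(q^{j+1};q^5)$, and $\theta(q^{1/5}),\theta(q^{2/5}),\ldots$ are rewritten through $(q^{1/5};q^{1/5})_\infty$ and $(q;q)_\infty$. The Jacobi triple product expressions $G=1/(q,q^4;q^5)_\infty$ and $H=1/(q^2,q^3;q^5)_\infty$ then express the result as $\frac{\eta(5\tau)}{\eta(\tau/5)}\bigl(R^{-1}-1-R\bigr)$. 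Its equality with $1$ is Ramanujan's classical identity
\[
\frac{1}{R(q)}-1-R(q)=\frac{\eta(\tau/5)}{\eta(5\tau)},
\]
and it is also forced by the identity already proved for all $x$. I expect the bookkeeping of the theta products at fifth roots of $q$ to be the only delicate step.
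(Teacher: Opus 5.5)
Your proposal is correct and follows the paper's route. The corollary is the specialization $xy=1$ of Theorem~\ref{thm: lmu Wronskian}, where $l\widehat{\mu}(xq^{n-1},1/x)=iq^{-\frac18}M_n(x;q)$ and the $q,t$-Fibonacci sequences become Schur's. The middle chain is the paper's evaluation of \eqref{eq: M_0} and \eqref{eq: M_1} at $x=q^{1/5}$, namely $M_1(q^{1/5};q)=\frac{(q)_\infty}{(q^{1/5};q^{1/5})_\infty}G(q)$ and $M_0(q^{1/5};q)=-\frac{(q)_\infty}{(q^{1/5};q^{1/5})_\infty}\bigl(q^{1/5}G(q)+q^{2/5}H(q)\bigr)$, followed by the Ramanujan--Watson identity.

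One caveat: in the paper the logic runs the other way. The value $1$ at $x=q^{1/5}$, obtained via Watson, is what fixes the constant in \eqref{eq: lmu Wronskian 2}. So your remark that Ramanujan's identity is ``also forced'' by the identity for all $x$ is only a consistency check, not an independent proof of it.
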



Finally, in Appendix \ref{app: A}, we present fundamental solutions of variations on the Ramanujan equation \eqref{eq: Ramanujan equation} by the little $\mu$-function,
and in Appendix \ref{app: B}, we present explicit formulas of the $q,t$-Fibonacci sequences $S_n(t,q)$ and $T_{n-1}(t,q)$. 

\section{Proofs of the main results}
\begin{proof}[Proof of Theorem $\ref{thm: lmu and gmu}$]
First, from the definitions of the $q$-Borel transformation and the $q$-Laplace transformation \eqref{eq: q-Borel Laplace transformation}, we have
\begin{align}
\label{eq: image of Ramanujan}
\mathcal{L}_q\circ\mathcal{B}_q\left(\widetilde{f}_1\right)(x,\lambda)=\sum_{n\in\Zz}\frac{1}{(q)_\infty\theta(-\lambda q^n/x)}{}_1\phi_0\hyper{0}{-}{q}{-\lambda q^n}. 
\end{align}
From the $q$-binomial theorem \cite[p.8, (1.3.2)]{GR}
\begin{align*}
{}_1\phi_0\hyper{a}{-}{q}{x}=\frac{(ax)_\infty}{(x)_\infty}
\end{align*}
and the relations satisfied by the theta function
\begin{align}
\label{eq: theta relation}
\theta(x)+x\theta(x q)=0,\quad \theta(x)+x\theta(x^{-1})=0, 
\end{align}
the image \eqref{eq: image of Ramanujan} is written as 
\begin{align*}
\mathcal{L}_q\circ\mathcal{B}_q\left(\widetilde{f}_1\right)(xy,-x)&=\frac{1}{(q)_\infty\theta(y q)}\sum_{n\in\Zz}\frac{(-1)^ny^{-n}q^\frac{n(n-1)}{2}}{(x q^n)_\infty}
  =\frac{1}{(x,q)_\infty\theta(y q)}{}_1\psi_2\hyper{x}{0,0}{q}{\frac{1}{y}}=-iq^\frac{1}{8}l\widehat{\mu}(x,y). 
\end{align*}

By the definition of the generalized $\mu$-function \eqref{eq: defi of gmu} and \eqref{eq: gmu formula 2}, we have 
\begin{align*}
  \lim_{a\to0}(xy)^{-\frac{\alpha }{2}}a^{-\frac{1}{2}}\widehat{\mu}(xq,y;a)
  =
  \lim_{a\to0}(xy)^{-\frac{\alpha }{2}}a^{-\frac{1}{2}}\widehat{\mu}(x,qy;a)
  =
  \lim_{a\to0}iq^{-\frac{1}{8}}\frac{(ax)_\infty}{(x,q)_\infty\theta(y q)}
    {}_1\psi_2\hyper{x}{0,a x}{q}{\frac{1}{y}}=l\widehat{\mu}(x,y). 
\end{align*}

\end{proof}
\begin{rmk}
The little $\mu$-function also coincides with the image of $\mathcal{L}_q$ of the following formal solution around $x=\infty$: 
\begin{align*}
g_\infty(x)=\sum_{n=0}^\infty\frac{1}{(q)_n}q^\frac{n(n+1)}{2}\left(\frac{1}{xy}\right)^n={}_0\phi_0\hyper{-}{-}{q}{-\frac{q}{xy}}=\left(-\frac{q}{xy}\right)_\infty
\end{align*}
of the following one order $q$-difference equation: 
\begin{align}
\label{eq: RB equation}
[xyT_x-1-xy]g(x)=0. 
\end{align} 
In fact, a bit calculation shows that
\begin{align*}
\widetilde{f}_\infty(x,\lambda)=\mathcal{L}_q\left(\frac{g_\infty(x)}{\theta(-xy)}\right)=\mathcal{L}_q\left(\frac{1}{(-xy)_\infty}\right)=\mathcal{L}_q\circ\mathcal{B}_q\left(\widetilde{f}_1\right)(x,\lambda). 
\end{align*}

\end{rmk}
\begin{proof}[Proof of Theorem $\ref{thm: lmu property}$]
By using \eqref{eq: lmu and gmu} of Theorem \ref{thm: lmu and gmu}, we have \eqref{eq: lmu formula 1}--\eqref{eq: lmu formula 6} from \eqref{eq: gmu formula 1}--\eqref{eq: gmu formula 7}, respectively. 
From \eqref{eq: lmu formula 1}, $l\widehat{\mu}(x q^{n-1},y)$  and the right hand side of \eqref{eq: lmu formula 7} satisfies the recursion \eqref{eq: q-Fibonacci}. 
Since \eqref{eq: lmu formula 7} holds for $n=0$ and $n=1$, we obtain \eqref{eq: lmu formula 7}. 
\end{proof}

\begin{rmk}
Putting $c=q$ or $c=x/y$ in \eqref{eq: lmu formula 6}, since $\theta(q)=\theta(1)=0$, we also obtain \eqref{eq: lmu formula 2}. 
\end{rmk}

\begin{proof}[Proof of Theorem $\ref{thm: M_n and q-Fibonacci}$]
The formulas \eqref{eq: M_n formula 1}--\eqref{eq: M_n formula 4} follow from \eqref{eq: lmu formula 2}, \eqref{eq: lmu formula 3}, \eqref{eq: lmu formula 3.5} and \eqref{eq: lmu formula 6} of Theorem \ref{thm: lmu property} immediately. 
Moreover, the equality \eqref{eq: M_0} and \eqref{eq: M_1} are obtained by putting $n=0$ and $n=1$ in \eqref{eq: M_n formula 3}, respectively. 
The formula \eqref{eq: M_n and S_n and T_n} follows from \eqref{eq: lmu formula 7}, \eqref{eq: M_0} and \eqref{eq: M_1}. 
\end{proof}

\begin{proof}[Proof of Theorem $\ref{thm: lmu Wronskian}$]
First, we show \eqref{eq: lmu Wronskian 2}. 
We set the left hand side of \eqref{eq: lmu Wronskian 2} as
\begin{align*}
\mathcal{M}_0(x,y):={}_0\phi_1\hyper{-}{0}{q}{xyq}l\widehat{\mu}(x,y)+xy{}_0\phi_1\hyper{-}{0}{q}{xyq^2}l\widehat{\mu}(x/q,y). 
\end{align*}
By the definition of the little $\mu$-function, $\mathcal{M}_0(x,y)$ has simple poles at $x, y\in q^\Zz$. 
The Wronskian of the fundamental solution $\widetilde{f}_0(xy), l\widehat{\mu}(x,y)$ of the Ramanujan equation \eqref{eq: Ramanujan equation}
\begin{align*}
\mathcal{W}(x,y)=\det \begin{bmatrix}
\widetilde{f}_0(xy/q) & \widetilde{f}_0(x y) \\
l\widehat{\mu}(x/q,y) & l\widehat{\mu}(x,y)
\end{bmatrix}
  &=\frac{1}{\theta(xy)}\mathcal{M}_0(x,y)
\end{align*}
satisfies the following pseudo-periodicities: 
\begin{align*}
\mathcal{W}(x q,y)=\mathcal{W}(x,y q)=-x y \mathcal{W}(x,y), 
\end{align*}
from \eqref{eq: lmu formula 1} and \eqref{eq: lmu formula 2}. 
Thus the function $\mathcal{M}_0(x,y)$ is a pseudo-constant with respect to $x$ and $y$. 
From Liouville's theorem, $\mathcal{M}_0(x,y)$ is a constant function depending only on the variable $q$. 
Hence, we have
\begin{align}
\label{eq: RR and little mu 1}
\mathcal{M}_0(x,y)=\mathcal{M}_0\left(q^\frac{1}{5},q^{-\frac{1}{5}}\right)=iq^{-\frac{1}{8}}\left(G(q)M_1\left(q^\frac{1}{5};q\right)+H(q)M_{0}\left(q^\frac{1}{5};q\right)\right). 
\end{align}
Then, to prove \eqref{eq: lmu Wronskian 2}, we show \eqref{eq: RR and little mu}. 

By putting $x=q^\frac{1}{5}$ in \eqref{eq: M_0} and \eqref{eq: M_1}, we obtain
\begin{align*}
M_0\left(q^\frac{1}{5};q\right)
  &=
  G(q)H(q)\left(\frac{\theta\left(q^3;q^5\right)}{\theta\left(q^\frac{6}{5}\right)\theta\left(q^\frac{2}{5}\right)}+\frac{\theta\left(q^8;q^5\right)}{\theta\left(q^\frac{1}{5}\right)\theta\left(q^\frac{7}{5}\right)}\right)
  =
  -\frac{(q)_\infty}{(q^\frac{1}{5};q^\frac{1}{5})_\infty}\left(q^\frac{1}{5}G(q)+q^\frac{2}{5}H(q)\right), 
  \\
  M_1\left(q^\frac{1}{5};q\right)
  &=
  G(q)H(q)\left(\frac{\theta\left(q^2;q^5\right)}{\theta\left(q^\frac{1}{5}\right)\theta\left(q^\frac{2}{5}\right)}+\frac{\theta\left(q^5;q^5\right)}{\theta\left(q^\frac{6}{5}\right)\theta\left(q^\frac{7}{5}\right)}\right)
  =
  \frac{(q)_\infty}{(q^\frac{1}{5};q^\frac{1}{5})_\infty}G(q). 
\end{align*}
Thus, the function $\mathcal{M}_0(x,y)$ is rewritten as
\begin{align*}
  \mathcal{M}_0(x,y)
  =
  \frac{iq^{-\frac{1}{8}}(q)_\infty}{(q^\frac{1}{5};q^\frac{1}{5})_\infty}\left(G(q)^2-q^\frac{1}{5}G(q)H(q)-q^\frac{2}{5}H(q)^2\right)
  =
  iq^{-\frac{1}{8}}\frac{\eta(5\tau)}{\eta(\tau/5)}\left(\frac{1}{R(q)}-1-R(q)\right).
\end{align*}
The third equality in \eqref{eq: lmu Wronskian 2} follows from \cite[p.48]{W}.

Next, we show \eqref{eq: lmu Wronskian 1}. we set the left hand side of \eqref{eq: lmu Wronskian 1} as
\begin{align*}
\mathcal{M}_1(x,y;c):=\frac{\theta(c/x)\theta(yc)\theta(x)\theta(y)}{\theta(c)\theta(yc/x)}\left(l\widehat{\mu}\left(\frac{x}{cq},yc\right)l\widehat{\mu}\left(x,y\right)-l\widehat{\mu}\left(\frac{x}{c},yc\right)l\widehat{\mu}\left(\frac{x}{q},y\right)\right). 
\end{align*}
Since the function $\mathcal{M}_1(x,y;c)$ satisfies 
$$\mathcal{M}_1(x,y,c q)=\mathcal{M}_1(x,y,c)$$
from \eqref{eq: lmu formula 2}, 
it is a pseudo-constant with respect to the variable $c$ and has simple poles at $c\in (x/y)q^\Zz$. 
From \eqref{eq: lmu formula 4}, we obtain
\begin{align*}
\mathcal{M}_1(x,y,c)=\lim_{c\to 1/y}\mathcal{M}_1(x,y,c)
  &=-iq^{-\frac{1}{8}}\mathcal{M}_0(x,y)=q^{-\frac{1}{4}}. 
\end{align*}

Finally, we show \eqref{eq: Wronskian relation 1}, \eqref{eq: Wronskian relation 2}, \eqref{eq: n-Wronskian relation 1} and \eqref{eq: n-Wronskian relation 2}. 
From \eqref{eq: lmu formula 6}, the functions $l\widehat{\mu}\left(x,y\right)$ and $l\widehat{\mu}\left(\frac{x}{q},y\right)$ are expressed as linear combinations of $l\widehat{\mu}(x q^{m-1},y)$ and $l\widehat{\mu}(x q^{n-1},y)$: 
\begin{align}
l\widehat{\mu}\left(x,y\right)
  &=
  \frac{l\widehat{\mu}(x q^{m-1},y)T_{n-1}(xy,q)-l\widehat{\mu}(x q^{n-1},y)T_{m-1}(xy,q)}{S_m(xy,q)T_{n-1}(xy,q)-S_n(xy,q)T_{m-1}(xy,q)}
  , \nonumber \\
\label{eq: lmu hyouji}
  l\widehat{\mu}\left(\frac{x}{q},y\right)
  &=
  \frac{l\widehat{\mu}(x q^{n-1},y)S_m(xy,q)-l\widehat{\mu}(x q^{m-1},y)S_n(xy,q)}{S_m(xy,q)T_{n-1}(xy,q)-S_n(xy,q)T_{m-1}(xy,q)}. 
\end{align}
By substituting the expressions \eqref{eq: lmu hyouji} in \eqref{eq: lmu Wronskian 1}, we obtain \eqref{eq: Wronskian relation 1}. 
Moreover, taking the limit $c\to 1/y$ in \eqref{eq: Wronskian relation 1}, we get \eqref{eq: Wronskian relation 2} from \eqref{eq: lmu formula 4}. 

From \eqref{eq: q-Fibonacci}, the determinant
\begin{align*}
\mathcal{F}_n(xy):=\det \begin{bmatrix}
S_{n+1}(xy,q) & S_{n}(xy,q) \\
T_{n}(xy,q) & T_{n-1}(xy,q)
\end{bmatrix}=S_{n+1}(xy,q)T_{n-1}(xy,q)-S_{n}(xy,q)T_{n}(xy,q)
\end{align*}
satisfies the following recursions
\begin{align*}
\mathcal{F}_n(xy)=-xyq^{n-1}\mathcal{F}_{n-1}(xy)=\cdots=(-xy)^nq^\frac{n(n-1)}{2}. 
\end{align*}
Then, if $m=n+1$ in \eqref{eq: Wronskian relation 1} and \eqref{eq: Wronskian relation 2},  we have \eqref{eq: n-Wronskian relation 1} and \eqref{eq: n-Wronskian relation 2}. 
\end{proof}

\begin{appendices}
\section{Variations on the Ramanujan Equation}
\label{app: A}

The following convex hull related to a linear $q$-difference equation
\begin{align*}
[a_n(x)T_x^n+a_{n-1}(x)T_x^{n-1}+\cdots +a_0(x)]f(x)=0, \quad a_l(x)=\sum_{k\geq0} c_{k,l}x^k \in\Cz[x], 
\end{align*}
is called the Newton-Puiseux diagram: 
\begin{align*}
\{(k,l)\in\Rz^2;\ c_{k,l}\neq0\}. 
\end{align*}
Putting $x\mapsto x/y, f(x/y)=v(x)$ in \eqref{eq: Ramanujan equation}, we rewrite
\begin{align}
\label{eq: Ramanujan equation 0}
[T_x^2-T_x-x q]v(x)=0. 
\end{align}
Then, the functions $\widetilde{f}_0(x), l\widehat{\mu}(x/y,y)$ are fundamental solutions of \eqref{eq: Ramanujan equation 0}. 
In particular, since 
\begin{align*}
\lim_{y\to 1}\theta(y) l\widehat{\mu}(x/y,y)=iq^{-\frac{1}{8}}\widetilde{f}_0(x), 
\end{align*}
we see that fundamental solutions of \eqref{eq: Ramanujan equation 0} are expressed solely in terms of the little $\mu$-function. 

On the other hand, the most degenerate second order linear $q$-difference equations of Laplace type \eqref{eq: Laplace} excluding those of constant coefficients are the following six type:   
\begin{align}
  [aT_x^2-T_x-bxq]v_1(x)&=0, 
  \quad
  [axqT_x^2-T_x-b]v_2(x)=0, 
  \quad
  [axqT_x^2-T_x-bx q]v_3(x)=0, 
  \nonumber \\
  \label{eq: various Ramanujan equation} 
  [aT_x^2-xqT_x-bxq]v_4(x)&=0, 
  \quad
  [ax qT_x^2-x qT_x-b]v_5(x)=0, 
  \quad
  [aT_x^2-xqT_x-b]v_6(x)=0
\end{align}
The each Newton-Puiseux diagram of the $q$-difference equations \eqref{eq: various Ramanujan equation} is as follows: 
\begin{figure}[htbp]
\centering
\begin{minipage}[b]{0.3\columnwidth}
    \centering
    \includegraphics[width=0.7\columnwidth]{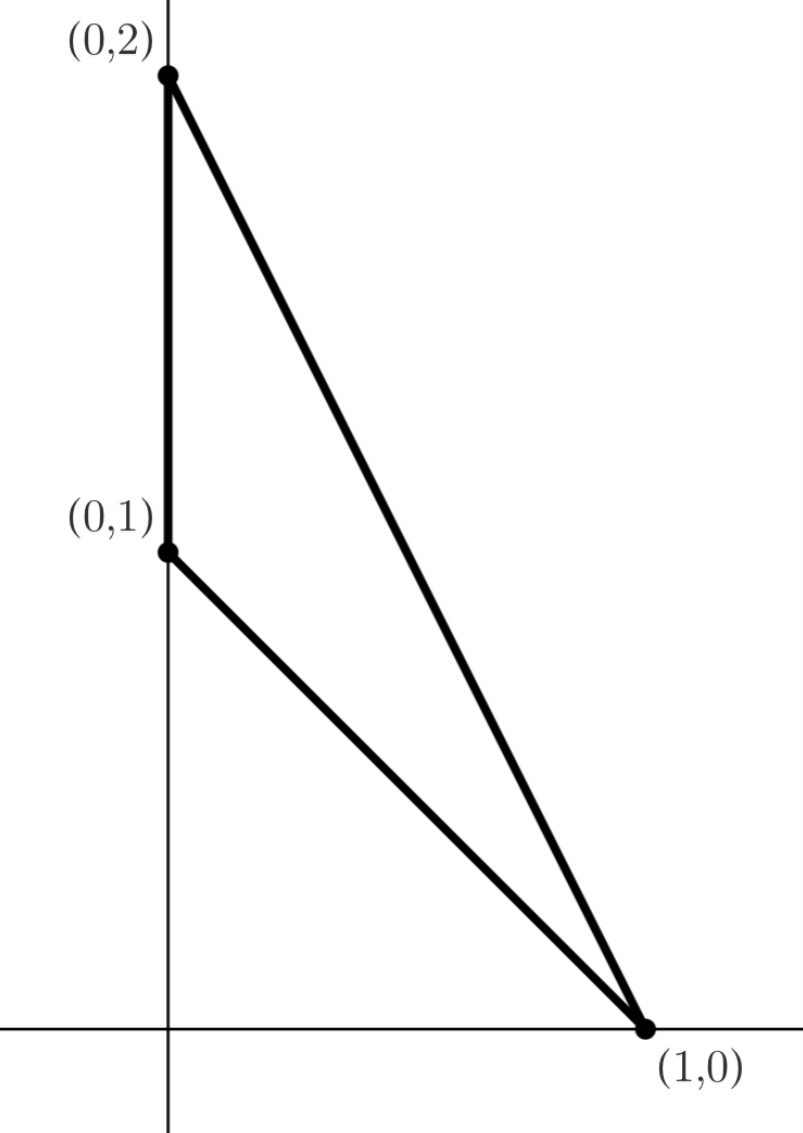}
    \caption{$v_1$}
    \label{fig:u1}
\end{minipage}
\begin{minipage}[b]{0.3\columnwidth}
    \centering
    \includegraphics[width=0.7\columnwidth]{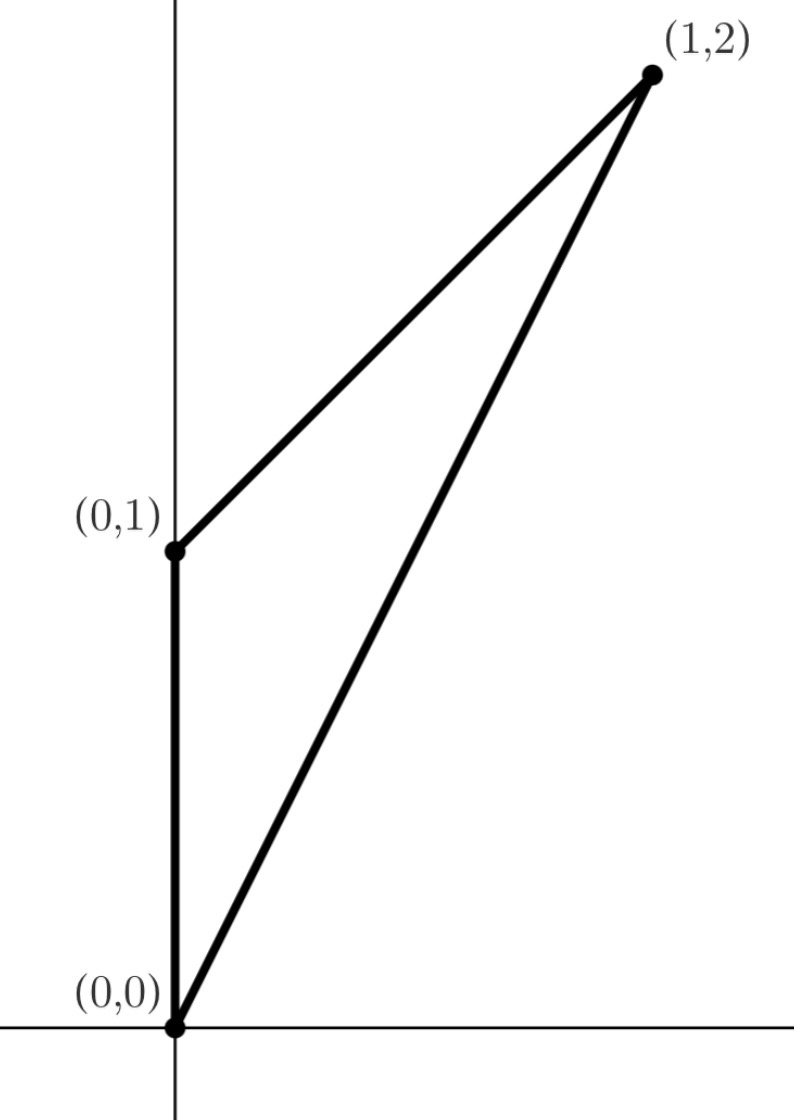}
    \caption{$v_2$}
    \label{fig:u2}
\end{minipage}
\begin{minipage}[b]{0.3\columnwidth}
    \centering
    \includegraphics[width=0.7\columnwidth]{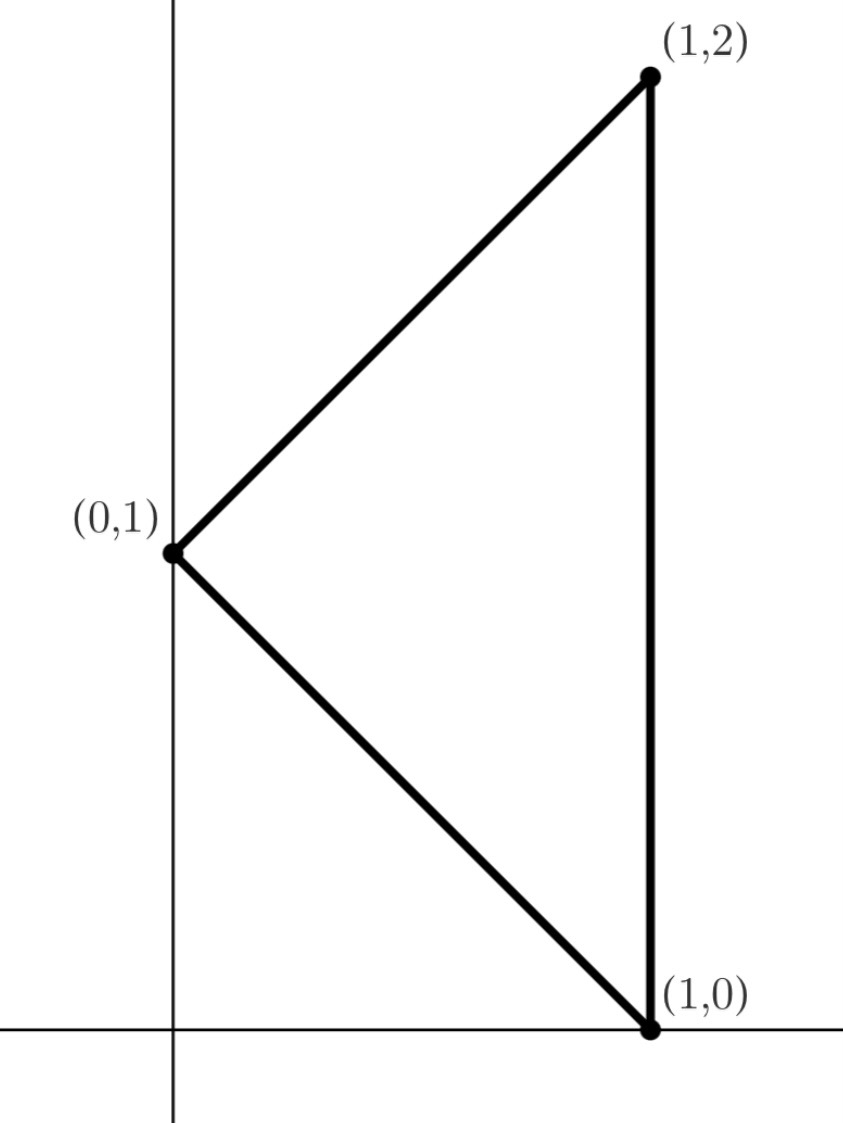}
    \caption{$v_3$}
    \label{fig:u3}
\end{minipage}
\end{figure}

\begin{figure}[htbp]
\centering
\begin{minipage}[b]{0.3\columnwidth}
    \centering
    \includegraphics[width=0.7\columnwidth]{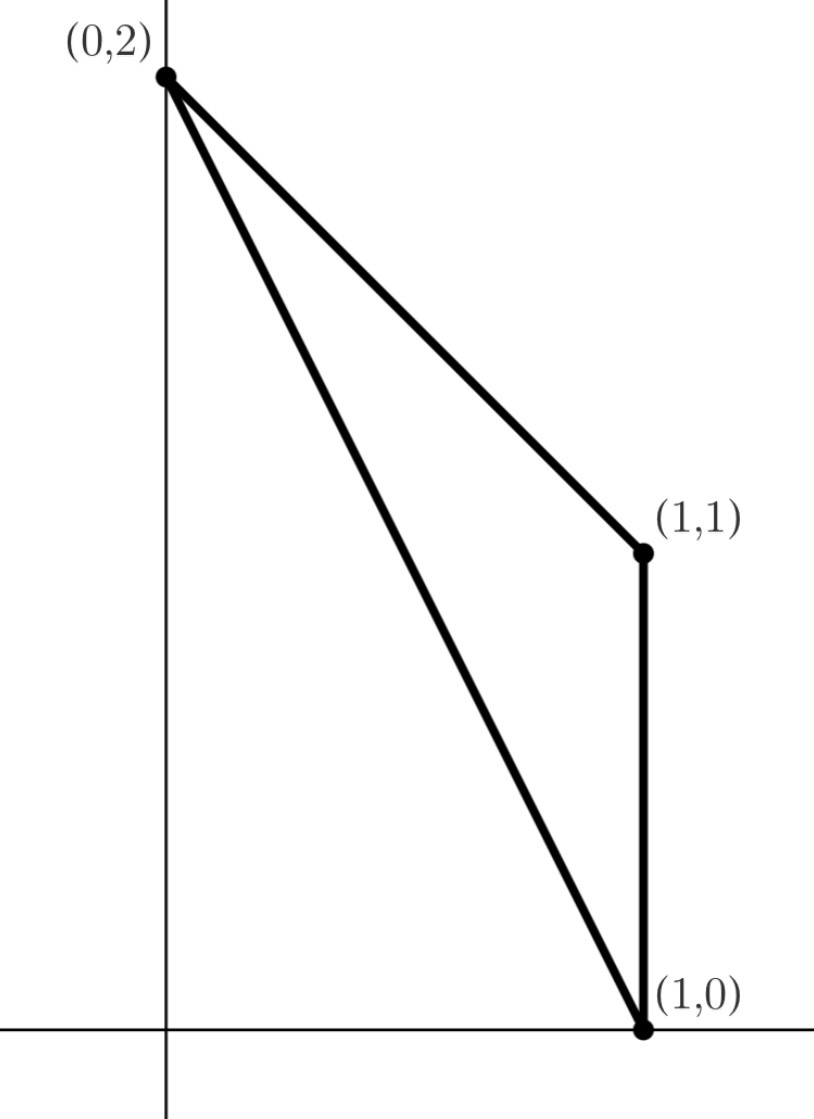}
    \caption{$v_4$}
    \label{fig:u4}
\end{minipage}
\begin{minipage}[b]{0.3\columnwidth}
    \centering
    \includegraphics[width=0.7\columnwidth]{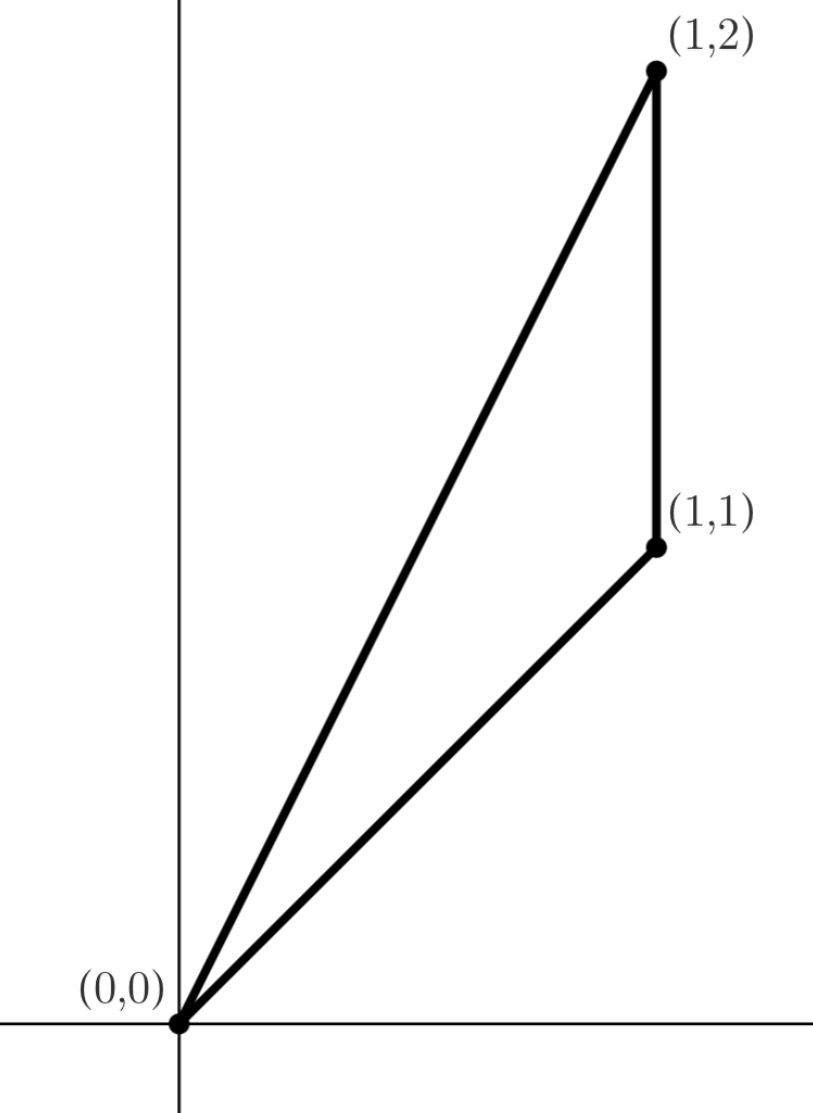}
    \caption{$v_5$}
    \label{fig:u5}
\end{minipage}
\begin{minipage}[b]{0.3\columnwidth}
    \centering
    \includegraphics[width=0.7\columnwidth]{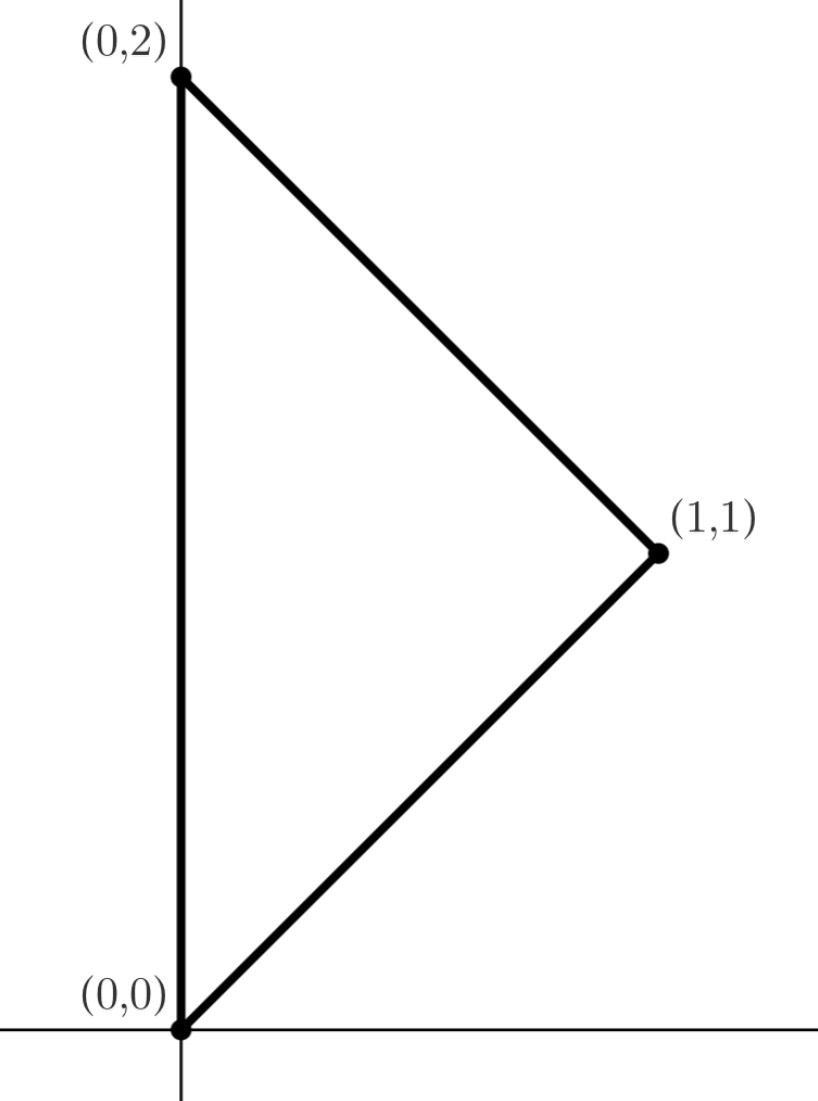}
    \caption{$v_6$}
    \label{fig:u6}
\end{minipage}
\end{figure}
\newpage
Figure \ref{fig:u1}, \ref{fig:u2}, \ref{fig:u4} and \ref{fig:u5} correspond to the Newton-Puiseux diagrams of $q$-difference equations associated with $\widetilde{f}_0$, while Figure \ref{fig:u3} and \ref{fig:u6} correspond to those of $q$-difference equations associated with the $q$-Airy function $${}_1\phi_1\hyper{0}{-q}{q}{x}=\sum_{n\geq0}\frac{(-1)^nq^\frac{n(n-1)}{2}}{(q^2;q^2)_n}x^n. $$
Moreover, various recursions of $q$-Fibonacci sequences  are also included in \eqref{eq: various Ramanujan equation}. 

Then, assuming $v_0(x)$ as a solution of \eqref{eq: Ramanujan equation 0}, the functions 
\begin{align}
  v_1(x)&=\frac{\theta(-ax)}{\theta(-x)}v_0(abx,q)
  ,\quad 
  v_2(x)=\theta(-ax)v_0(abx/q,q), 
  \quad 
  v_3(x)=\theta(-ax)v_0(ab x^2/q,q^2), 
  \nonumber \\
  \label{eq: various little mu function}
  v_4(x)&=\frac{\theta(x/b)}{\theta(-x)}v_0(ab/x,q), 
  \quad 
  v_5(x)=\theta(xq/b)v_0(ab/xq,q), 
  \quad 
  v_6(x)=\theta(xq/b)v_0(ab/x^2q;q^2)
\end{align}
are solutions of \eqref{eq: various Ramanujan equation} and the little $\mu$-functions give a fundamental solutions under the transformations \eqref{eq: various little mu function}, respectively. 
\begin{exg}
Ismail-Zhang \cite{IZ} introduced the following series as a bilateral version of the Rogers-Ramanujan series: 
\begin{align*}
u_m(a,q):=\sum_{n\in\Zz}\frac{q^{n^2+mn}}{(aq)_n}. 
\end{align*}
This series $u_m(a,q)$ satisfies the following $1$-parameter deformed $q$-Fibonacci recursion: 
\begin{align}
\label{eq: a-Fibonacci}
q^{m+1}u_{m+2}(a,q)+a u_{m+1}(a,q)-u_m(a,q)=0. 
\end{align}
Here, we define 
\begin{align*}
u(a,x,q):=\sum_{n\in\Zz}\frac{q^{n^2}}{(aq)_n}x^n, 
\end{align*}
then, we have $u(a,q^m,q)=u_m(a,q)$ and $u(a,x,q)$ satisfies the following $q$-difference equation: 
\begin{align}
\label{eq: satisfied by u}
[xq T_x^2+a T_x-1]u(a,x,q)=0. 
\end{align}
This equation \eqref{eq: satisfied by u} coincides with the $q$-difference equation satisfied by $\theta(x/a)l\widehat{\mu}(x/ayq,y/a)$. 
More precisely, we have 
\begin{align*}
iq^{-\frac{1}{8}}u(a,x,q)=(1/a,q)_\infty\theta(x/a)l\widehat{\mu}(x/a,1/aq) 
\end{align*}
and \cite[Theorem 3.3]{IZ} from \eqref{eq: lmu formula 3} and \eqref{eq: lmu formula 7}, respectively.
\end{exg}
\end{appendices}

\begin{appendices}
\section{Explicit formulas of $q,t$-Fibonacci sequences}
\label{app: B}
\begin{defi}[\cite{A}, \cite{C}]
For $n\in\Zz$, we define
$$
\mathcal{S}_{n}(t,q)
   :=
   \begin{cases}
   \sum_{j = 0}^{\lfloor \frac{n - 1}{2}\rfloor}
      q^{j^{2}}t^{j}\binom{n - 1 - j}{j}_{q} & (n \not= 0) \\
   0 & (n = 0)
   \end{cases}, 
$$
where
\begin{align*}
\binom{\alpha}{\beta}_q:=\frac{(q)_\alpha}{(q)_\beta(q)_{\alpha-\beta}}. 
\end{align*}
\end{defi}

\begin{prop}
\label{prop: q-Fibonacci}
For $n\in\Zz_{\geq0}$, the following equalities follow:  
\begin{align}
\label{eq: S=S and T=S}
S_n(t,q)=\mathcal{S}_{n}(t,q),\quad T_n(t,q)= t\mathcal{S}_{n}(qt,q). 
\end{align}
\end{prop}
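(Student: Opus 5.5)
The plan is to prove both identities in \eqref{eq: S=S and T=S} by the same scheme. We check that the explicit sums satisfy the recursion \eqref{eq: q-Fibonacci} together with the correct initial values. Since \eqref{eq: q-Fibonacci} is a second order recursion, a solution is uniquely determined by $F_0,F_1$, so this suffices.

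For $S_n$, the initial values are immediate. By definition $\mathcal{S}_0(t,q)=0$, and for $n=1$ the sum contains only $j=0$ with $\binom{0}{0}_q=1$, giving $\mathcal{S}_1(t,q)=1$. The main step is to verify that, for $n\ge 2$,
\begin{align*}
\mathcal{S}_n(t,q)=\mathcal{S}_{n-1}(t,q)+tq^{n-2}\mathcal{S}_{n-2}(t,q).
\end{align*}
Here I will use the $q$-Pascal rule
\begin{align*}
\binom{N}{j}_q=\binom{N-1}{j}_q+q^{N-j}\binom{N-1}{j-1}_q,
\end{align*}
with $N=n-1-j$. The first term, summed against $q^{j^2}t^j$, reproduces $\mathcal{S}_{n-1}(t,q)$. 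In the second term I shift $j\mapsto j+1$. It becomes $q^{(j+1)^2}t^{j+1}q^{n-2-2j-1}\binom{n-3-j}{j}_q$, and since $(j+1)^2+n-3-2j=j^2+n-2$, this equals $tq^{n-2}\cdot q^{j^2}t^j\binom{n-3-j}{j}_q$. Summing over $j$ gives $tq^{n-2}\mathcal{S}_{n-2}(t,q)$. I expect the only delicate point to be the summation ranges: the upper limits $\lfloor (n-1)/2\rfloor$, $\lfloor (n-2)/2\rfloor$ and $\lfloor (n-3)/2\rfloor$ differ, and the boundary cases $n=2$ (where $\mathcal{S}_0=0$ by fiat) need separate treatment. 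I will handle this using the convention that $\binom{N}{j}_q=0$ for $j<0$ or $j>N$, which holds with $(q)_\alpha=(q)_\infty/(q^{\alpha+1})_\infty$ since $1/(q)_{-k}=0$ for negative integers. Checking $n=2$ directly also works: $\mathcal{S}_2=1=\mathcal{S}_1+t\mathcal{S}_0$.

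For $T_n$, set $U_n:=t\mathcal{S}_n(qt,q)$, indexed so that $U_n$ plays the role of $T_n$, i.e. of $F_{n+1}$. The required recursion is $T_n=T_{n-1}+tq^{n-1}T_{n-2}$. Applying the $S$-recursion with $t\mapsto qt$ gives
\begin{align*}
\mathcal{S}_n(qt,q)=\mathcal{S}_{n-1}(qt,q)+qt\,q^{n-2}\mathcal{S}_{n-2}(qt,q).
\end{align*}
Multiplying by $t$ yields exactly $U_n=U_{n-1}+tq^{n-1}U_{n-2}$. The initial values are $U_0=t\mathcal{S}_0=0=T_0$ and $U_1=t$. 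On the other hand, the recursion at $n=1$ gives $T_1=T_0+tq^{0}T_{-1}=t$, so $U_1=T_1$ as well. Uniqueness then gives $T_n=U_n$ for all $n\ge0$, and the identity also holds formally at $n=-1$ if one extends appropriately, though the proposition only claims $n\ge0$.
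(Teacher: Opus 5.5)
Your proof is correct and follows the same scheme as the paper: you check the recursion \eqref{eq: q-Fibonacci} and the initial values, invoke uniqueness, and use $t\mapsto qt$ for $T_n$. The paper simply quotes the recursion for $\mathcal{S}_n(t,q)$ from the literature, whereas you derive it from the $q$-Pascal rule and handle the summation ranges correctly. Your matching of initial values at $T_0,T_1$ is cleaner than the paper's matching at $T_{-1},T_0$. The paper needs $t\mathcal{S}_{-1}(qt,q)=1$ and gets it by running the recursion backwards, while the defining sum for $\mathcal{S}_{-1}$ is empty and equals $0$. For the same reason, your closing remark about $n=-1$ holds only under that backward extension, not under the literal definition.
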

\begin{proof}
From \cite{A} and \cite{C}, since 
\begin{align}
\label{eq: mathS recursion}
\mathcal{S}_n(t,q)&=\mathcal{S}_{n-1}(t,q)+t q^{n-2}\mathcal{S}_{n-2}(t,q), \\
\mathcal{S}_0(t,q)&=0,\quad \mathcal{S}_1(t,q)=1, 
\end{align}
the first equality of \eqref{eq: S=S and T=S} holds. 

On the other hand, For \eqref{eq: mathS recursion}, replacing $n\mapsto n-1, t\mapsto qt$, we have 
\begin{align*}
\mathcal{S}_{n-1}(qt,q)=\mathcal{S}_{n-2}(qt,q)+t q^{n-2}\mathcal{S}_{n-3}(qt,q), 
\end{align*}
Therefore, the sequence $t\mathcal{S}_{n-1}(qt,q)$ satisfies \eqref{eq: q-Fibonacci} with the initial values $t\mathcal{S}_{-1}(qt,q)=\mathcal{S}_{1}(qt,q)-\mathcal{S}_{0}(qt,q)=1$ and $t\mathcal{S}_{0}(qt,q)=0$, the second equalities of \eqref{eq: S=S and T=S} holds. 
\end{proof}
\begin{lem}
\label{lem: q-Fibonacci}
$(1)$\ If a sequence $F_n(t,q)$ satisfies \eqref{eq: q-Fibonacci}, then the sequence $(-t)^nq^\frac{n(n-1)}{2}F_{-n+1}\left(t,q^{-1}\right)$ also satisfies \eqref{eq: q-Fibonacci}. \\
$(2)$\ For all integers $n$, we have: 
\begin{align*}
S_n(t,q)=(-t)^{n-1}q^\frac{n(n-1)}{2}T_{-n}\left(t,q^{-1}\right),\quad T_{n-1}(t,q)=(-t)^{n}q^\frac{n(n-1)}{2}S_{-n+1}\left(t,q^{-1}\right). 
\end{align*}
\end{lem}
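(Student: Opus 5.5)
Part (1) is a direct verification. Set $G_n(t,q):=(-t)^nq^{\frac{n(n-1)}{2}}F_{-n+1}(t,q^{-1})$ and check that $G_n-G_{n-1}-tq^{n-2}G_{n-2}=0$.

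Apply the recursion \eqref{eq: q-Fibonacci} with $q\mapsto q^{-1}$ at index $-n+3$. Since $q^{-(-n+3-2)}=q^{n-1}$, this reads
\begin{align*}
F_{-n+3}(t,q^{-1})=F_{-n+2}(t,q^{-1})+tq^{n-1}F_{-n+1}(t,q^{-1}),
\end{align*}
which we solve for the lowest-index term:
\begin{align*}
F_{-n+1}(t,q^{-1})=t^{-1}q^{1-n}\bigl(F_{-n+3}(t,q^{-1})-F_{-n+2}(t,q^{-1})\bigr).
\end{align*}
Note that \eqref{eq: q-Fibonacci} determines the sequence in both directions, so all integer indices are allowed.

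Multiplying by $(-t)^nq^{\frac{n(n-1)}{2}}$ gives $G_n$ on the left. On the right, compare with
\begin{align*}
G_{n-1}=(-t)^{n-1}q^{\frac{(n-1)(n-2)}{2}}F_{-n+2}(t,q^{-1}),\qquad G_{n-2}=(-t)^{n-2}q^{\frac{(n-2)(n-3)}{2}}F_{-n+3}(t,q^{-1}).
\end{align*}
The exponents match by the identities $\frac{n(n-1)}{2}+1-n=\frac{(n-1)(n-2)}{2}$ and $\frac{(n-1)(n-2)}{2}+n-2=\frac{n(n-1)}{2}-1$. Together with the sign bookkeeping $(-t)^n t^{-1}=-(-t)^{n-1}$ and $(-t)^n=t^2(-t)^{n-2}$, this yields exactly $G_n=G_{n-1}+tq^{n-2}G_{n-2}$. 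The only care needed here is this exponent and sign bookkeeping.

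For (2), apply (1) to $F=T$, i.e. $F_n=T_{n-1}$. The transformed sequence $G_n=(-t)^nq^{\frac{n(n-1)}{2}}T_{-n}(t,q^{-1})$ satisfies \eqref{eq: q-Fibonacci}. Its initial values come from the initial conditions $T_{-1}=1$, $T_0=0$, which are independent of $q$:
\begin{align*}
G_0=T_0(t,q^{-1})=0,\qquad G_1=-t\,T_{-1}(t,q^{-1})=-t.
\end{align*}
By uniqueness of solutions with given initial data, $G_n=-tS_n(t,q)$, which is the first identity after dividing by $-t$.

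Similarly, apply (1) to $F_n=S_n$, giving $H_n=(-t)^nq^{\frac{n(n-1)}{2}}S_{-n+1}(t,q^{-1})$. Its initial values are
\begin{align*}
H_0=S_1=1,\qquad H_1=-t\,S_0=0,
\end{align*}
which are the initial data of $T_{n-1}$. Hence $H_n=T_{n-1}(t,q)$ for all $n\in\Zz$, which is the second identity.
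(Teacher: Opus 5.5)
Your proof is correct and follows the paper's argument: part (1) is the direct verification the paper calls clear, and part (2) uses the same initial-value matching plus uniqueness. The only cosmetic differences are two. First, the paper builds the factor $(-t)^{-1}$ into its $T'_{n-1}$ rather than dividing $G_n$ by $-t$ afterwards. Second, your second exponent identity is just a rearrangement of the first; the identity actually needed to match the $F_{-n+3}(t,q^{-1})$ term with $tq^{n-2}G_{n-2}$ is $\frac{(n-2)(n-3)}{2}+n-2=\frac{(n-1)(n-2)}{2}$, which is equally immediate.
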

\begin{proof}
(1)\ It is clear. \\
(2)\ 
Since the sequences $S_n(t,q)$ and $T_{n-1}(t,q)$ satisfy \eqref{eq: q-Fibonacci}, the sequences 
\begin{align*}
S_n'(t,q):=(-t)^{n}q^\frac{n(n-1)}{2}S_{-n+1}\left(t,q^{-1}\right),\quad T_{n-1}'(t,q):=(-t)^{n-1}q^\frac{n(n-1)}{2}T_{-n}\left(t,q^{-1}\right)
\end{align*}
also satisfy \eqref{eq: q-Fibonacci} from $(1)$ of this Lemma. 
Moreover, since 
\begin{align*}
S_0'(t,q)=1=T_{-1}(t,q), \quad S_1'(t,q)=0=T_0(t,q),\quad T_{-1}'(t,q)=0=S_0(t,q), \quad T_{0}'(t,q)=1=S_1(t,q)
\end{align*}
we have
\begin{align*}
S_n'(t,q)=T_{n-1}(t,q),\quad T_{n-1}'(t,q)=S_n(t,q). 
\end{align*}
\end{proof}

From the above Proposition and Lemma, we immediately obtain the following Theorem and Corollary. 
\begin{thm}
For $n\in\Zz$, we have
\begin{align*}
S_n(t,q)&=
\begin{cases}
\mathcal{S}_n(t,q) & n\geq0\\
-(-t)^{n}q^\frac{n(n-1)}{2}\mathcal{S}_{-n}\left(q^{-1} t,q^{-1}\right) & n<0
\end{cases}, \\
T_n(t,q)&=
\begin{cases}
t\mathcal{S}_n(qt,q) & n \geq0\\
(-t)^{n+1}q^\frac{n(n+1)}{2}\mathcal{S}_{-n}\left(t,q^{-1}\right) & n<0
\end{cases}. 
\end{align*}
\end{thm}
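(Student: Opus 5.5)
The plan is to treat the two ranges $n\geq0$ and $n<0$ separately. The nonnegative range is exactly Proposition \ref{prop: q-Fibonacci}. For $S_n$ with $n\geq0$ there is nothing to do. For $T_n$ with $n\geq0$, Proposition \ref{prop: q-Fibonacci} gives $T_n(t,q)=t\mathcal{S}_n(qt,q)$, which is the stated formula. All the work is therefore in the negative indices, and there the tool is Lemma \ref{lem: q-Fibonacci}(2). It reflects the index $n\mapsto -n$ (up to a shift) and simultaneously inverts $q\mapsto q^{-1}$, which moves a negative index to a positive one.

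For $S_n$ with $n<0$, I would use the first identity of Lemma \ref{lem: q-Fibonacci}(2) with $n$ replaced by $-n+1$ and $q$ replaced by $q^{-1}$. Reading it from right to left then expresses $T_{n-1}$, and after relabelling we get $S$ at negative index in terms of $T$ at a positive index in base $q^{-1}$. More directly, the second identity $T_{n-1}(t,q)=(-t)^nq^{n(n-1)/2}S_{-n+1}(t,q^{-1})$, with $q\to q^{-1}$ and $n\to -n+1$, gives
\[
S_n(t,q)=(-t)^{n-1}q^{\frac{n(n-1)}{2}}\,T_{-n}(t,q^{-1}).
\]
Since $n<0$, the index $-n$ is positive, so the positive case applies in base $q^{-1}$: $T_{-n}(t,q^{-1})=t\,\mathcal{S}_{-n}(q^{-1}t,q^{-1})$. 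Substituting, the prefactor becomes $(-t)^{n-1}t=-(-t)^n$, which reproduces the claimed $-(-t)^nq^{n(n-1)/2}\mathcal{S}_{-n}(q^{-1}t,q^{-1})$.

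For $T_n$ with $n<0$, I would use the second identity with $n$ replaced by $n+1$:
\[
T_n(t,q)=(-t)^{n+1}q^{\frac{n(n+1)}{2}}\,S_{-n}(t,q^{-1}).
\]
Here $-n>0$, so by the positive case $S_{-n}(t,q^{-1})=\mathcal{S}_{-n}(t,q^{-1})$, and this is the stated formula.

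The main obstacle is bookkeeping. The exponent $q^{n(n-1)/2}$ has to be tracked correctly under the substitutions $n\to n+1$ or $n\to -n+1$ together with $q\to q^{-1}$. I would check each case carefully, including the boundary case $n=-1$ (for $T_{-1}=1$, for example), against the initial values. A second point to confirm is that Proposition \ref{prop: q-Fibonacci} remains valid with $q$ replaced by $q^{-1}$. It does, because it was proved purely from the recursion \eqref{eq: mathS recursion} and the initial values, which hold formally in $q$.
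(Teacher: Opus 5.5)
Your proposal is correct and follows the paper's argument: the case $n\geq0$ is Proposition~\ref{prop: q-Fibonacci}, and for $n<0$ you apply Lemma~\ref{lem: q-Fibonacci}(2) and then Proposition~\ref{prop: q-Fibonacci} in base $q^{-1}$ at the positive index $-n$; your sign and exponent bookkeeping checks out. The displayed identity for $S_n$ that you obtain by substitution is literally the first identity of Lemma~\ref{lem: q-Fibonacci}(2), so that detour is unnecessary.
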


\begin{coro}
For $n\in\Zz$, we have
\begin{align}
\label{eq: Sn}
S_n(q)&=\begin{cases}
\sum_{0\leq 2j\leq n-1}q^{j^2}\binom{n-j-1}{j}_q & n>0\\
(-1)^{n-1}q^\frac{n(n-1)}{2}\sum_{0\leq 2j \leq -n-1}q^{j^2+jn}\binom{-n-j-1}{j}_q & n< 0\\
0 & n=0
\end{cases}, \\
\label{eq: Tn}
T_n(q)&=\begin{cases}
\sum_{0\leq 2j\leq n-1}q^{j^2+j}\binom{n-j-1}{j}_q & n>0\\
(-1)^{n-1}q^\frac{n(n+1)}{2}\sum_{0\leq 2j\leq -n-1}q^{j^2+jn+j}\binom{-n-j-1}{j}_q & n<0\\
0 & n=0
\end{cases}. 
\end{align}
\end{coro}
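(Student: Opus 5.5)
This corollary follows by substituting the explicit formulas of the preceding Theorem into \eqref{eq: Sn} and \eqref{eq: Tn} at $t=1$, with $S_n(q)=S_n(1,q)$ and $T_n(q)=T_n(1,q)$ as in \eqref{eq: Schur Fibo}. The work is in three steps.

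\textbf{Step 1 (the case $n=0$).} By definition $S_0=0$, and $T_0=0$ by the initial conditions of \eqref{eq: q-Fibonacci}.

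\textbf{Step 2 (the case $n>0$).} Here $S_n(q)=\mathcal{S}_n(1,q)$. This is $\sum_{0\le 2j\le n-1}q^{j^2}\binom{n-1-j}{j}_q$, since the condition $j\le\lfloor (n-1)/2\rfloor$ is the same as $2j\le n-1$. Likewise $T_n(q)=\mathcal{S}_n(q,q)$. Inserting $t=q$ multiplies each summand by $q^j$, which gives the exponent $j^2+j$.

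\textbf{Step 3 (the case $n<0$).} For $S_n$ the Theorem gives $S_n(1,q)=-(-1)^n q^{n(n-1)/2}\mathcal{S}_{-n}(q^{-1},q^{-1})$, and $-(-1)^n=(-1)^{n-1}$. Since $-n>0$, the definition of $\mathcal{S}$ gives
\[
\mathcal{S}_{-n}(q^{-1},q^{-1})=\sum_{0\le 2j\le -n-1} q^{-j^2}q^{-j}\binom{-n-1-j}{j}_{q^{-1}}.
\]
The key tool is the inversion rule for Gaussian binomials with nonnegative integer entries,
\[
\binom{N}{j}_{q^{-1}}=q^{-j(N-j)}\binom{N}{j}_q ,
\]
applied with $N=-n-1-j$. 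This multiplies the summand by $q^{-j(-n-1-2j)}=q^{jn+j+2j^2}$. The total exponent is then $-j^2-j+jn+j+2j^2=j^2+jn$, which matches \eqref{eq: Sn}.

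For $T_n$ with $n<0$, the Theorem gives $(-1)^{n+1}q^{n(n+1)/2}\mathcal{S}_{-n}(1,q^{-1})$, and $(-1)^{n+1}=(-1)^{n-1}$. Each summand now carries $q^{-j^2}$ from $\mathcal{S}_{-n}(1,q^{-1})$ and $q^{jn+j+2j^2}$ from the inversion, so the exponent is $j^2+jn+j$, which matches \eqref{eq: Tn}.

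The only real obstacle is exponent bookkeeping, together with checking that the Gaussian binomials in this range have nonnegative integer entries, so that the inversion rule applies. Both are confirmed by a check on small cases, which I would do as a sanity test. For example, $S_{-1}(q)$ from the recursion: running \eqref{eq: q-Fibonacci} backward at $n=1$ gives $S_1=S_0+q^{-1}S_{-1}$, so $S_{-1}=q$. The formula gives $(-1)^{-2}q^{1}\cdot 1=q$, as it should.
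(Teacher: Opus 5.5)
Your proposal is correct and follows the same route as the paper, which obtains the Corollary "immediately" by setting $t=1$ in the preceding Theorem; for $n<0$ it uses the inversion $\binom{N}{j}_{q^{-1}}=q^{-j(N-j)}\binom{N}{j}_q$, the step the paper leaves implicit. One small correction: the applicability of that inversion needs no small-case check, since $0\le 2j\le -n-1$ already gives $N=-n-1-j\ge j\ge 0$.
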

\end{appendices}

\medskip
\begin{flushleft}
Genki Shibukawa\\ 
Kitami Institute of Technology\\
165, Koen-cho, Kitami, Hokkaido 090-8507\\
Japan\\
E-mail: g-shibukawa@mail.kitami-it.ac.jp\\
\end{flushleft}

\medskip 
\begin{flushleft}
Satoshi Tsuchimi \\
Kindai University \\
3-4-1, Kowakae, Higasi-Osaka, Osaka, 577-8502\\
Japan \\
E-mail: tsuchimi@math.kindai.ac.jp
\end{flushleft}
\end{document}